\newtheorem{thm}{Theorem} 
\newtheorem{lemma}[thm]{Lemma}
\newtheorem{cor}[thm]{Corollary} 
\theoremstyle{definition}
\newtheorem{defn}[thm]{Definition}
\newtheorem{claim}[thm]{Claim}
\newcommand{\A}{\mathcal{A}}
\newcommand{\B}{\mathcal{B}}
\newcommand{\F}{\mathcal{F}}
\title{A generalization of diversity for intersecting families}
\author{Van Magnan\thanks{Department of Mathematical Sciences, University of Montana. Email: \texttt{van.magnan@umontana.edu}.
}
\qquad
Cory Palmer\thanks{Department of Mathematical Sciences, University of Montana. Email: \texttt{cory.palmer@umontana.edu}.
Research supported by a grant from the Simons Foundation \#712036.}
\qquad
Ryan Wood\thanks{Department of Mathematical Sciences, University of Montana. Email: \texttt{ryan2.wood@umontana.edu}.}
}
\begin{document}

\maketitle

\begin{abstract}
 Let $\F\subseteq \binom{[n]}{r}$ be an intersecting family of sets and let $\Delta(\F)$ be the maximum degree in $\F$, i.e., the maximum number of edges of $\F$ containing a fixed vertex. The {\it diversity} of $\F$ is defined as $d(\F) := |\F| - \Delta(\F)$. 
 Diversity can be viewed as a measure of distance from the `trivial' maximum-size intersecting family given by the Erd\H os-Ko-Rado Theorem. Indeed, the diversity of this family is $0$. Moreover, the diversity of the largest non-trivial intersecting family \`a la Hilton-Milner is $1$.
 It is known that the maximum possible diversity of an intersecting family $\mathcal{F}\subseteq \binom{[n]}{r}$ is $\binom{n-3}{r-2}$ as long as $n$ is large enough. 

 We introduce a generalization called the {\it $C$-weighted diversity} of $\F$ as $d_C(\F) := |\F| - C \cdot \Delta(\F)$. We determine the maximum value of $d_C(\F)$ for intersecting families $\mathcal{F} \subseteq \binom{[n]}{r}$ and characterize the maximal families for $C\in \left[0,\frac{7}{3}\right)$ as well as give general bounds for all $C$. Our results imply, for large $n$, a recent conjecture of Frankl and Wang concerning a related diversity-like measure.
Our primary technique is a variant of Frankl's Delta-system method. 
\end{abstract}

\section{Introduction}

Let $[n]:=\{1,2,\ldots, n\}$ denote the standard $n$-element set. Let $2^{[n]}$ denote the collection of all subsets of $[n]$ and $\binom{[n]}{r}$ the collection of all $r$-subsets of $[n]$. A subset $\F \subseteq 2^{[n]}$ is a {\it family} and a subset $\F \subseteq \binom{[n]}{r}$ is an {\it $r$-uniform family}. 
We will frequently use the terminology of hypergraphs and use the term {\it edge} for a member $F$ of a family $\F$ and {\it vertex} for an element of the {\it ground set} $[n]$. 

A family $\F$ is \emph{intersecting} if every pair of edges in $\F$ has a non-empty intersection.
The celebrated Erd\H os-Ko-Rado Theorem~\cite{EKR} establishes the maximum size of an intersecting family $\F \subseteq \binom{[n]}{r}$ as $\binom{n-1}{r-1}$ for $n \geq 2r$ with equality only if $\F$ is a \emph{star}, i.e, the family of all $r$-sets containing a fixed vertex $x$, when $n>2r$. Many generalizations and extensions of this fundamental theorem have been introduced and studied. A survey of Frankl and Tokushige~\cite{FrTo} gives an excellent history and overview of these problems.
A classical extension extension is to investigate the \emph{stability} of the EKR star construction. If we view the star as a ``trivial'' solution to the intersecting problem, then a natural question is to ask for the largest intersecting family not contained in a star. Hilton and Milner~\cite{HM} answered this question and showed the maximum is attained by the family of size $\binom{n-1}{r-1} - \binom{n-r-1}{n-1} + 1$
consisting of an edge $Y$ and all edges that contain a fixed vertex $x \not \in Y$ and at least one vertex of $Y$. For more results of this type, maximizing the size of an intersecting family without being a subfamily of one of a given set of families, we direct the reader towards work by Han and Kohayakawa~\cite{HaKo} and later by Kostochka and Mubayi~\cite{KoMu}.

We may investigate the size of intersecting families that are non-trivial by maximizing related statistics that are known to be small for star constructions. The {\it maximum degree} $\Delta(\F)$ of a family $\F$ is the size of the largest star subfamily of $\F$, i.e., the maximum number of edges containing a fixed vertex.
The {\it diversity} $d(\F)$ of a family $\F$ is the difference between its size $|\F|$ and maximum degree $\Delta(\F)$, i.e.,
\[
d(\F) := |\F| - \Delta(\F).
\]

One may observe that the diversity of a star (and its sub-families) is $0$. Lemons and Palmer \cite{LePa} (answering a question of Katona) proved that an intersecting family $\F \subseteq \binom{[n]}{r}$ has diversity
\[
d(\F) \leq \binom{n-3}{r-2}
\]
whenever $n \geq 6r^3$.
Improvements on the threshold on $n$ were given a series of papers (see \cite{Fr-B, Fr-A, ku-thresh}). The current best threshold is  $n \geq 36r$ is given by Frankl and Wang~\cite{frankl2023improved}. Frankl~\cite{Fr-A} conjectured a best-possible threshold on $n$, but this was disproved by Huang~\cite{huang},  Kupavskii \cite{ku-thresh} and, via linear programming, Wagner~\cite{wagner}.

For $n$ large enough, an intersecting family $\F \subseteq \binom{[n]}{r}$ attaining the maximum diversity necessarily contains (up to isomorphism) 
the family of all edges that intersect $[3]=\{1,2,3\}$ in exactly two vertices. We denote this family by
\[
\A_\mathbb{K} := \left\{E\in \binom{[n]}{r} : |E\cap [3]| = 2\right\}.
\]
Note that $\F$ may also include any number of edges that contain $\{1,2,3\}$,
that is, $\F$ is contained in the family denoted by 
\[
\A_{\mathbb{K}^+} := \left\{E\in \binom{[n]}{r} : |E\cap [3]| \ge 2\right\}.
\]
 We often refer to any family $\F$ satisfying $\A_{\mathbb{K}}\subseteq \F \subseteq \A_{\mathbb{K}^+}$ as a \emph{``two out of three'' family}. Note that edges of $\A_{\mathbb{K}^+} \setminus \A_\mathbb{K}$ make a contribution of $0$ to the diversity of a ``two out of three'' family $\F$ as each such edge increases both $|\F|$ and $\Delta(\F)$ by $1$.

In this paper we introduce the following generalization of diversity.
\begin{defn}[Diversity]
The \emph{$C$-weighted
diversity} of a family $\F$ is
\[
d_C(\F) := |\F| - C \cdot \Delta(\F).
\]    
\end{defn}

We are concerned with the maximum value of $d_C(\F)$ over intersecting families $\F \subseteq \binom{[n]}{r}$.
For $C=0$, this is simply the maximum size of an intersecting family $\F \subseteq \binom{[n]} {r}$, so this question can be viewed as a generalization of the Erd\H os-Ko-Rado Theorem. 

In fact, for $0 \leq C <1$, it is easy to see that for large enough $n$,
\begin{equation}\label{proto-bound}
d_C(\F)\leq (1-C) \binom{n-1}{r-1}    
\end{equation}
with equality only if $\F$ is a star. Indeed, if $\F$ is not a star, then by the Hilton-Milner Theorem, $d_C(\F) < |\F| \leq \binom{n-1}{r-1} - \binom{n-r-1}{n-1} + 1 \leq r \binom{n-2}{r-2}<(1-C) \binom{n-1}{r-1}$ for $n$ large enough. 

The introduction of the $C$-weighted diversity parameter has several motivations. First are  recent alternative generalizations of diversity (see \cite{Fr-A, frankl2022best, frankl2023improved}). Another are the degree versions (see \cite{dezafrankl} for a summary) of the Erd\H os-Ko-Rado Theorem that determine the maximum of $|\F|$ when $\Delta(\F) \leq c |\F|$ for a constant $c$. Our results are closely related to these and we obtain similar extremal constructions.
The final motivation is a problem of Kupavskii and Zakharov~\cite{KuZa} that asks to find the largest possible value of $c>1$ for which
\[
c \cdot d(\F) + \Delta(\F) = c|\F| - (c-1) \Delta(\F) \leq \binom{n-1}{r-1},
\]
holds for intersecting families $\F \subseteq \binom{[n]}{r}$ with $n>2r$. Solving this is essentially equivalent to establishing inequality~(\ref{proto-bound}) for all $n > 2r$.

When $C=1$, the $C$-weighted diversity $d_C(\F)$ is the ordinary diversity $d(\F)$. When maximizing $d_C(\F)$ in this case (and for smaller $C$), one can assume that a family is saturated, i.e., the addition of any edge will violate the intersecting condition. This assumption is often useful in similar problems. However, for larger $C$, we cannot make this assumption. Indeed, removing an edge containing all vertices of maximum degree increases the diversity. 

Our first result is an extension of the original diversity bound to the range $C \in \left[1,\frac{3}{2}\right)$.

\begin{restatable}{thm}{mainprop}\label{32-ker-prop}
Fix $r\geq 3$ and let $\F \subseteq \binom{[n]}{r}$ be an intersecting family of maximum $C$-weighted diversity $d_C(\F)$. For $n$ large enough,
\begin{align*}
       \A_{\mathbb{K}} &\subseteq \F \subseteq \A_{\mathbb{K}^+}& \text{ for } C=1,\\
    \F &= \A_{\mathbb{K}} & \text{ for }1< C<\frac{3}{2}.
\end{align*}    
Further, for $1\leq C < \frac{3}{2}$, 
\[
d_C(\F) \leq (3-2C)\binom{n-3}{r-2}.
\]
\end{restatable}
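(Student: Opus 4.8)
The plan starts from the elementary identity
$d_C(\F) = |\F| - C\,\Delta(\F) = d(\F) - (C-1)\,\Delta(\F)$,
valid for every family. At $C=1$ this is just ordinary diversity, so the inequality $d_C(\F)=d(\F)\le \binom{n-3}{r-2}=(3-2C)\binom{n-3}{r-2}$ is exactly the Lemons--Palmer bound and the only new content for $C=1$ is the characterization of the extremal families. For $1<C<\tfrac32$ the identity only gives the weaker $d_C(\F)\le\binom{n-3}{r-2}$, so the real task is to gain the extra $2(C-1)\binom{n-3}{r-2}$. Note first that a maximizer satisfies $d_C(\F)\ge d_C(\A_{\mathbb{K}})=(3-2C)\binom{n-3}{r-2}$; combining this with the Lemons--Palmer inequality in the form $|\F|\le\Delta(\F)+\binom{n-3}{r-2}$ forces $(1-C)\Delta(\F)\ge 2(1-C)\binom{n-3}{r-2}$, i.e.\ $\Delta(\F)\le 2\binom{n-3}{r-2}$, and hence $|\F|\le 3\binom{n-3}{r-2}$. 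Thus a maximizer lives in the ``small maximum degree'' regime, and the problem becomes a degree-version of the Erd\H os--Ko--Rado problem: among intersecting families whose maximum degree is comparable to $2\binom{n-3}{r-2}$, maximize $|\F|-C\,\Delta(\F)$, with the expectation that the optimum is attained only near the ``two out of three'' family.

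The main tool for carrying this out is a variant of Frankl's Delta-system (sunflower) method. Since $n$ is large relative to $r$, one replaces $\F$ by its \emph{kernel} $\G$: the family of minimal sets $K\subseteq[n]$ that occur as the core of a sunflower in $\F$ with more than some threshold $\phi=\phi(r)$ petals. The standard consequences of the method are that $\G$ is intersecting, that $\G$ may be assumed to live on a ground set of size bounded in terms of $r$ alone, that $|\F|=\sum_{K\in\G}(1+o(1))\binom{n-|K|}{r-|K|}$, and that $\Delta(\F)=\max_v\sum_{K\in\G,\,v\in K}(1+o(1))\binom{n-|K|}{r-|K|}$, where the sums are dominated by the smallest members of $\G$ because the weights $\binom{n-i}{r-i}$, $i=1,2,\dots$, are separated by multiplicative gaps tending to infinity with $n$. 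So, up to lower order, $d_C(\F)$ is governed by the finite quantity $\sum_{K\in\G}w(|K|)-C\max_v\sum_{K\ni v}w(|K|)$, which one analyzes scale by scale. A singleton $\{v\}\in\G$ (with $\phi>r$ petals) forces every edge of $\F$ to contain $v$, so $\F$ is a star and $d_C(\F)\le 0<(3-2C)\binom{n-3}{r-2}$: not a maximizer. Hence $\G$ has no singletons, and the top relevant scale is $|K|=2$. The doubletons of $\G$ form an intersecting graph, and a classical fact is that an intersecting graph is a star or a triangle. If it is a star at a vertex $v$, then $v$ absorbs the whole scale-$2$ contribution, the scale-$2$ part of $d_C(\F)$ equals $(1-C)\cdot(\text{that contribution})\le 0$, and $d_C(\F)=o(\binom{n-3}{r-2})$: again not a maximizer. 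Therefore the doubletons of $\G$ are exactly $\binom{T}{2}$ for a $3$-element set $T$; moreover any core of size $\ge 3$ would, being intersecting with all of $\binom{T}{2}$, have to contain a $2$-subset of $T$, contradicting minimality, so in fact $\G=\binom{T}{2}$ and $\tau(\F)=2$. The scale-$2$ contribution to $d_C(\F)$ is then $(3-2C)\binom{n-3}{r-2}$, every deeper scale contributes a non-positive amount, and one already obtains $d_C(\F)\le(3-2C)\binom{n-3}{r-2}$, with $\F$ approximately equal to $\A_{\mathbb{K}^+}(T)$.

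It remains to promote ``approximately equal'' to the exact characterization, and \emph{this is the main obstacle}: the Delta-system method locates $\F$ only to within a lower-order error at each scale, while the statement is exact. To close the gap I would, with $T$ in hand, argue directly about $\F$. Using $\tau(\F)=2$ with a covering pair $\{a,b\}\subseteq T$, split $\F$ into the two cross-intersecting ``link'' families $\A,\B\subseteq\binom{[n]\setminus\{a,b\}}{r-1}$ (edges through $a$ not $b$, resp.\ through $b$ not $a$) and the family $\mathcal C\subseteq\binom{[n]\setminus\{a,b\}}{r-2}$ of the edges through both. From $\Delta(\F)\ge d_a(\F)=|\A|+|\mathcal C|$ one gets $d_C(\F)\le |\B|+(1-C)\bigl(|\A|+|\mathcal C|\bigr)$, and feeding in a (stability version of a) Pyber-type product bound $|\A|\,|\B|\le\binom{n-3}{r-2}^2$ forces $\A$ and $\B$ to be the full star at the third vertex $c$ of $T$; the contribution of $\mathcal C$ is then pinned down by also invoking $\Delta(\F)\ge d_c(\F)$, and one reads off that $d_C(\F)=(3-2C)\binom{n-3}{r-2}$ holds exactly when $\A_{\mathbb{K}}\subseteq\F\subseteq\A_{\mathbb{K}^+}$. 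Finally, as noted in the introduction each edge containing all of $T$ contributes $1-C$ to $d_C$, so the maximum is attained by every such $\F$ when $C=1$ but only by $\F=\A_{\mathbb{K}}$ when $1<C<\tfrac32$. The delicate steps are (i) extracting $\tau(\F)\le 2$ and the exact identity $\G=\binom{T}{2}$ with enough control on the residual edges (those meeting $T$ in at most one vertex) to discard them, and (ii) the cross-intersecting stability input in (i)'s aftermath, where $|\A|$ and $|\B|$ are a priori known only to be a constant fraction of $\binom{n-3}{r-2}$.
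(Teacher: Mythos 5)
Your skeleton---pass to the kernel/flower base, rule out singleton cores, observe that the size-two cores form an intersecting graph and hence a star or a triangle, and dispose of the star case---is the same as the paper's. The gap is in how you finish. Two of the ``standard consequences'' you invoke are false as stated: neither $|\F|=\sum_{K\in\G}(1+o(1))\binom{n-|K|}{r-|K|}$ nor $\Delta(\F)=\max_v\sum_{K\in\G,\,v\in K}(1+o(1))\binom{n-|K|}{r-|K|}$ holds in general, because the kernel only guarantees that every edge \emph{contains} some $K\in\G$, giving one-sided bounds; $\F$ need not contain a $(1-o(1))$-fraction of the $r$-sets over any kernel member (e.g.\ half of a star has kernel $\{\{v\}\}$ but only $\tfrac12\binom{n-1}{r-1}$ edges). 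So your ``scale-by-scale'' bookkeeping yields only approximate information, which is exactly why you then face the ``approximate to exact'' obstacle. The repair you propose does not work as described: the ``(stability version of a) Pyber-type product bound $|\A|\,|\B|\le\binom{n-3}{r-2}^2$'' is neither established nor, even granted, sufficient---an upper bound on a product cannot by itself force $\A$ and $\B$ to be the full stars at the third vertex, and no precise stability statement is identified that would. Hence the exact inequality for $1\le C<\tfrac32$ and the characterizations (including $\A_{\mathbb{K}}\subseteq\F$ at $C=1$ and $\F=\A_{\mathbb{K}}$ for $1<C<\tfrac32$) are not actually proved.

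What you are missing is a property of the kernel that you never use: since the petal threshold exceeds $r$, every core $K\in\G$ is a \emph{transversal} of $\F$ (an edge disjoint from $K$ would have to meet more than $r$ pairwise disjoint petals). Once the size-two cores form the triangle $\binom{T}{2}$ on $T=\{x,y,z\}$, every edge of $\F$ meets each pair of $T$ and hence meets $T$ in at least two vertices; writing $\F_2,\F_3$ for the edges meeting $T$ in exactly two, resp.\ three, vertices, we get $\F=\F_2\sqcup\F_3$ \emph{exactly}, so there are no ``residual edges'' to control. Pigeonhole over the three vertices of $T$ gives some $x\in T$ with $|\F(x)|\ge\tfrac23|\F_2|+|\F_3|$, hence the exact inequality $d_C(\F)\le\left(1-\tfrac{2C}{3}\right)|\F_2|+(1-C)|\F_3|$. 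Since $|\F_2|\le|\A_{\mathbb{K}}|=3\binom{n-3}{r-2}$ and $1-\tfrac{2C}{3}\ge 0$ for $C<\tfrac32$, this is at most $(3-2C)\binom{n-3}{r-2}$, and comparing with $d_C(\F)\ge d_C(\A_{\mathbb{K}})$ forces $|\F_2|=3\binom{n-3}{r-2}$ (i.e.\ $\A_{\mathbb{K}}\subseteq\F$) and, for $C>1$, $\F_3=\emptyset$; at $C=1$ the $\F_3$ term contributes nothing to $d_C$, giving $\A_{\mathbb{K}}\subseteq\F\subseteq\A_{\mathbb{K}^+}$. This is how the paper closes the argument, with no cross-intersecting or stability machinery; in particular your preliminary reduction via Lemons--Palmer to $\Delta(\F)\le 2\binom{n-3}{r-2}$ is correct but unnecessary.
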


For $C\geq \frac{3}{2}$ we have $d_C(\A_\mathbb{K})\leq 0$. In this case, a different construction is required for positive $C$-weighted diversity. Recall that a ($n,r,\lambda$)-design is a family $\F \subseteq \binom{[n]}{r}$ such that each pair of distinct vertices is found in exactly $\lambda$ edges. 
The {\it Fano plane} is the unique (up to isomorphism) $(7,3,1)$-design. It is given by the following $7$ edges on vertex set $[7]=\{1,2,3,\ldots,7\}$:
\[
\mathbb{F} := \{124,137,156,235,267,346,457\}.
\]
 Let $\mathbb{F}^+ := \mathbb{F}\cup \left(\binom{[7]}{4}\setminus \mathbb{F}^c \right)$ denote the union of $\mathbb{F}$ with the 4-sets of $[7]$ that have nonempty intersection with the edges of the Fano plane. Now, define
 \[
\A_\mathbb{F} := \left\{E\in \binom{[n]}{r} :  E\cap [7] \in \mathbb{F}\right\}, \
\text{and}
\]
\[
\A_{\mathbb{F}^+} := \left\{E\in \binom{[n]}{r} : E\cap [7] \in \mathbb{F}^+\right\}.
\]

 Our main theorem characterizes the structure of families with maximum diversity $d_C(\F)$ in the range $C \in \left[\frac{3}{2}, \frac{7}{3}\right)$.

\begin{restatable}{thm}{mainthm}\label{main-gen}
Fix $r\geq 3$ and let $\F \subseteq \binom{[n]}{r}$ be an intersecting family of maximum $C$-weighted diversity $d_C(\F)$. For $n$ large enough,
\begin{align*}
   \F &= \A_{\mathbb{F}^+} &\text{ for }\frac{3}{2}\leq C<\frac{7}{4},\\
       \A_{\mathbb{F}} &\subseteq \F \subseteq \A_{\mathbb{F}^+}\text{ and } d_C(\F\setminus \A_{\mathbb{F}})=0 & \text{ for } C=\frac{7}{4},\\
    \F &= \A_{\mathbb{F}} & \text{ for }\frac{7}{4}< C<\frac{7}{3}.
\end{align*}
\end{restatable}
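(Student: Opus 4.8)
The plan is to follow the same strategy that (presumably) underlies Theorem~\ref{32-ker-prop}: reduce to a ``kernel'' structure via a variant of Frankl's $\Delta$-system method, show that for $n$ large the extremal family must be built on a small trace on a bounded ground set, and then optimize a weighted count over those bounded configurations. Concretely, I would first argue that the relevant comparison is between ``two out of three'' families, Fano-type families $\A_{\mathbb{F}}\subseteq\F\subseteq\A_{\mathbb{F}^+}$, and a short list of other candidate kernels; the $\Delta$-system machinery should guarantee that any intersecting $\F$ of near-maximum $C$-weighted diversity has all of its edges meeting a fixed set $T$ of bounded size $t=t(r)$ in one of a bounded family $\mathcal T\subseteq 2^{T}$ of ``trace patterns,'' and $d_C(\F)$ is, up to lower-order terms, a linear functional of which patterns appear, weighted by $\binom{n-t}{r-|\cdot|}$-type factors. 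The key quantitative inputs are: $|\A_{\mathbb{F}}| = 7\binom{n-7}{r-3}$ with $\Delta(\A_{\mathbb{F}}) = 3\binom{n-7}{r-3}$ (each vertex of $[7]$ lies in $3$ of the $7$ Fano lines), so $d_C(\A_{\mathbb{F}}) = (7-3C)\binom{n-7}{r-3}$, which is positive exactly for $C<\tfrac{7}{3}$; and for $\A_{\mathbb{F}^+}$ the extra $4$-set edges each add $1$ to both $|\F|$ and possibly to $\Delta$, with the point being that a vertex of maximum degree in $\A_{\mathbb{F}^+}$ gains a $\binom{n-7}{r-4}$-order term from those supersets.

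Next I would compute the $C$-weighted diversity of each candidate on the bounded ground set and compare leading terms. For $\A_{\mathbb{F}}$ the leading term is $(7-3C)\binom{n-7}{r-3}$; for $\A_{\mathbb{F}^+}$ one must check that adding all Fano-compatible $4$-sets of $[7]$ is beneficial precisely when the ``$C$ times the extra degree'' cost is outweighed by the gain, and this is where the threshold $\tfrac{7}{4}$ enters: the number of $4$-sets of $[7]$ hitting every line, versus how many of them pass through a fixed vertex, should produce a factor that flips sign at $C=\tfrac{7}{4}$. So the middle of the proof is: (i) establish that no kernel other than a Fano-type one can have positive leading-order $C$-weighted diversity for $C\in[\tfrac32,\tfrac73)$ — in particular rule out ``two out of three'' (which gives $(3-2C)\binom{n-3}{r-2}\le 0$ here) and any other small design or sunflower-free configuration, which I expect to be a finite case check enabled by the $\Delta$-system reduction; (ii) among Fano-type families, optimize over which supersets in $\A_{\mathbb{F}^+}\setminus\A_{\mathbb{F}}$ to include, obtaining the three regimes; (iii) handle the boundary $C=\tfrac{7}{4}$ by observing that then the supersets contribute exactly $0$ to $d_C$, exactly as the $|E\cap[3]|=3$ edges do for ordinary diversity, giving the ``interval'' conclusion $\A_{\mathbb{F}}\subseteq\F\subseteq\A_{\mathbb{F}^+}$ with $d_C(\F\setminus\A_{\mathbb{F}})=0$.

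For uniqueness (the ``$\F=\A_{\mathbb{F}}$'' and ``$\F=\A_{\mathbb{F}^+}$'' claims rather than mere isomorphism to a subfamily), I would use a stability/robustness argument: the $\Delta$-system reduction identifies the extremal kernel up to isomorphism, and then a direct exchange argument shows that any edge not of the prescribed form either violates intersection with some edge that must be present, or strictly decreases $d_C$, or (in the $C=\tfrac74$ case) leaves $d_C$ unchanged and hence is optional. One subtlety to address carefully is that, unlike the $C\le 1$ case, we cannot assume $\F$ is saturated — indeed removing a high-degree superset can increase $d_C$ — so the argument must track the maximum-degree vertex explicitly and cannot freely add edges; I would phrase the optimization as choosing both the kernel and the ``extra'' edges through the (unique up to symmetry) maximum-degree vertex.

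The main obstacle, I expect, is step~(i): proving that the $\Delta$-system method actually leaves only finitely many kernels to consider and that every non-Fano kernel has nonpositive leading-order $C$-weighted diversity throughout $[\tfrac32,\tfrac73)$. The difficulty is that $C$-weighted diversity is a difference of two large quantities, so leading-order cancellation is generic and one must compare $\binom{n-t}{r-s}$ terms across different $s$; controlling this uniformly in $r$ (for all $r\ge 3$) and showing the Fano plane genuinely wins — as opposed to some larger partial design on more vertices — is the crux, and is presumably exactly where the paper's variant of Frankl's Delta-system method does its work. A secondary technical point is pinning down the exact thresholds $\tfrac74$ and $\tfrac73$ as the sign-change points of explicit small rational functions of $C$ coming from Fano-plane incidence counts, which is routine once the reduction is in place.
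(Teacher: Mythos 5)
Your overall strategy does match the paper's: a flower-base (Delta-system) reduction to a bounded configuration, followed by a weighted optimization over Fano-type families in which the thresholds $\tfrac{7}{4}$ and $\tfrac{7}{3}$ arise exactly as you predict (each vertex of $[7]$ lies on $3$ of the $7$ lines and in $16$ of the $28$ admissible $4$-sets, giving the sign changes of $7-3C$ and $28-16C$, and the averaging bound $\Delta(\F)\ge\tfrac{3}{7}|\F_3|+\tfrac{4}{7}|\F_4|+\tfrac{5}{7}|\F_{\ge 5}|$ then forces $\F_4,\F_{\ge 5}$ to be full, empty, or diversity-neutral in the three regimes). However, there is a genuine gap at precisely the point you flag as the ``main obstacle'': you do not prove, and it is not a routine finite check, that the only kernel compatible with $d_C(\F)\ge(7-3C)\binom{n-7}{r-3}$ is the Fano plane. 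The paper's argument here has three non-obvious ingredients that your proposal is missing. First, the small flower-base members $\B:=\mathfrak{B}_2\F\cup\mathfrak{B}_3\F$ are analyzed by the size of a minimum transversal: $\tau(\B)=1$ and $\tau(\B)=2$ are eliminated not by comparing candidate kernels but by degree arguments (via Lemma~\ref{ker-lemma} and a two-inequality trick that bounds $|\F(\overline{x})|$), and the $\tau(\B)=2$ case in particular requires a delicate analysis showing that otherwise some vertex lies in $\tfrac{2}{3}$ of the edges. Second, when $\tau(\B)=3$ one needs the structural fact (Proposition~\ref{folk}) that a $3$-uniform intersecting family with $\tau=3$ spans at most $7$ vertices, plus a separate cross-intersecting argument to kill the $|V(\B)|\le 6$ case. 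Third, on $7$ vertices one needs Lemma~\ref{baby-lemma}: a weighted (``mass'') version of diversity on the base, proved by a degree-sum and cross-intersecting-link argument, showing that $\varrho(\B)-C\Delta_\varrho(\B)>6-3C$ with $\varrho(\B)>6$ forces every pair of vertices to lie in exactly one member, i.e.\ $\B$ is the Fano plane. Without these steps your step~(i) is an assertion, not a proof, and your framing of the reduction (``all edges meet a fixed set $T$ in one of a bounded family of trace patterns'') is not what the flower-base machinery delivers directly; one only knows each edge contains a base member, and one must separately argue (using that size-$3$ and size-$4$ transversals of the Fano plane contain lines, together with the Sperner property) that the entire flower base equals $\B$ before the final decomposition $\F=\F_3\sqcup\F_4\sqcup\F_{\ge 5}$ and the averaging bound are legitimate.

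A secondary, smaller gap: your uniqueness step is described as an ``exchange argument,'' but in the paper no exchange is needed; once $\B$ is the Fano plane, the linear bound $d_C(\F)\le(1-\tfrac{3}{7}C)|\F_3|+(1-\tfrac{4}{7}C)|\F_4|+(1-\tfrac{5}{7}C)|\F_{\ge 5}|$ together with $|\F_3|\le 7\binom{n-7}{r-3}$ and $|\F_4|\le 28\binom{n-7}{r-4}$ pins down $\F$ exactly (and at $C=\tfrac{7}{4}$ forces the degree-regularity giving $d_C(\F\setminus\A_{\mathbb{F}})=0$). That part of your plan would work once the kernel identification is done, so the substantive missing content is the chain Proposition~\ref{folk} $\rightarrow$ Lemma~\ref{baby-lemma} $\rightarrow$ ``$\B$ is the whole flower base.''
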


Observe that the above result is only concerned with $\frac{3}{2} \leq C < \frac{7}{3}$ as $d_C(\A_\mathbb{F})\le 0$ for $C\geq \frac{7}{3}$. 
When $C=\frac{7}{4}$, for an extremal family $\F$, an averaging argument will show that the edges of $\F\setminus \A_{\mathbb{F}}$ have contribution to $C$-weighted diversity at most zero. For this contribution to be non-negative, each vertex in the underlying Fano plane must be contained in the same number of edges of $\F$. As this holds for $\A_\mathbb{F}$, the same must hold for $\F\setminus \A_{\mathbb{F}}$. Observe that this parallels the case of Theorem~\ref{32-ker-prop}  with $C=1$, in which extremal families are those containing $\A_\mathbb{K}$ that may have additional edges containing all three specially identified vertices. Similar to that case, we may ignore these edges when computing $d_C(\F)$.

So, by computing $d_C(\A_{\mathbb{F}^+})$ and $d_C(\A_{\mathbb{F}})$ we get the following corollary.

\begin{cor}\label{cor-bound}
Let $\F \subseteq \binom{[n]}{r}$ be an intersecting family. For $n$ large enough,
\[
d_C(\F) \leq 
\begin{cases}
   \displaystyle (7-3C) \binom{n-7}{r-3} + (28-16C)\binom{n-7}{r-4} &\text{ for }\frac{3}{2}\leq C<\frac{7}{4},\\
    \displaystyle  (7-3C) \binom{n-7}{r-3} & \text{ for }\frac{7}{4}\leq C<\frac{7}{3},
\end{cases}
\]
with equality exactly when $\F$ is isomorphic to $\A_{\mathbb{F}^+}$ in the former case and $\A_{\mathbb{F}}$ in the latter (with $C \neq \frac{7}{4}$).    
\end{cor}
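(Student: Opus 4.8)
The plan is to derive Corollary~\ref{cor-bound} directly from Theorem~\ref{main-gen}: that theorem already pins down the extremal intersecting families (up to isomorphism) throughout the range $\frac{3}{2}\le C<\frac{7}{3}$, so the only work remaining is to evaluate $d_C(\A_{\mathbb{F}})$ and $d_C(\A_{\mathbb{F}^+})$ and check that these equal the claimed expressions. The ``equality exactly when'' clause is then immediate from the structural characterization in Theorem~\ref{main-gen}, with the boundary value $C=\frac{7}{4}$ treated separately.

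First I would compute the parameters of $\A_{\mathbb{F}}$. Since $\mathbb{F}$ consists of $7$ edges, each a $3$-subset of $[7]$, and each such $3$-set extends to an $r$-set meeting $[7]$ in exactly that set in $\binom{n-7}{r-3}$ ways, we get $|\A_{\mathbb{F}}|=7\binom{n-7}{r-3}$. For the maximum degree: each vertex of $[7]$ lies in exactly $3$ lines of the Fano plane (by the $(7,3,1)$-design property), hence in $3\binom{n-7}{r-3}$ edges of $\A_{\mathbb{F}}$, whereas a vertex outside $[7]$ lies in $7\binom{n-8}{r-4}$ edges, which is of order $n^{r-4}$ and so smaller than $3\binom{n-7}{r-3}=\Theta(n^{r-3})$ once $n$ is large. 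Therefore $\Delta(\A_{\mathbb{F}})=3\binom{n-7}{r-3}$ and $d_C(\A_{\mathbb{F}})=(7-3C)\binom{n-7}{r-3}$, as claimed.

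Next I would do the same for $\A_{\mathbb{F}^+}$. The set $\mathbb{F}^+$ consists of the $7$ Fano lines together with all $4$-subsets of $[7]$ except the $7$ complements of Fano lines, i.e.\ $7$ triples and $\binom{7}{4}-7=28$ quadruples; hence $|\A_{\mathbb{F}^+}|=7\binom{n-7}{r-3}+28\binom{n-7}{r-4}$. A vertex $v\in[7]$ lies in $3$ of the triples and, of the $\binom{6}{3}=20$ quadruples of $[7]$ through $v$, exactly $4$ are complements of the $4$ lines avoiding $v$, leaving $16$; so $v$ has degree $3\binom{n-7}{r-3}+16\binom{n-7}{r-4}$ in $\A_{\mathbb{F}^+}$, while vertices outside $[7]$ again have strictly smaller degree for large $n$. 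Thus $\Delta(\A_{\mathbb{F}^+})=3\binom{n-7}{r-3}+16\binom{n-7}{r-4}$, giving $d_C(\A_{\mathbb{F}^+})=(7-3C)\binom{n-7}{r-3}+(28-16C)\binom{n-7}{r-4}$. Substituting these values into the three cases of Theorem~\ref{main-gen} produces the stated bounds. At $C=\frac{7}{4}$ we have $28-16C=0$, so $d_C(\A_{\mathbb{F}^+})=d_C(\A_{\mathbb{F}})$; and for a sandwiched extremal family $\F$ with $\A_{\mathbb{F}}\subseteq\F\subseteq\A_{\mathbb{F}^+}$, the condition $d_C(\F\setminus\A_{\mathbb{F}})=0$ together with the fact that every vertex of $[7]$ has equal degree in $\F$ (hence $\Delta$ splits additively across $\A_{\mathbb{F}}$ and $\F\setminus\A_{\mathbb{F}}$) yields $d_C(\F)=d_C(\A_{\mathbb{F}})+d_C(\F\setminus\A_{\mathbb{F}})=(7-3C)\binom{n-7}{r-3}$, matching the second displayed case.

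There is essentially no obstacle here: all the real content sits in Theorem~\ref{main-gen}, and this corollary is a bookkeeping exercise with binomial coefficients. The only points needing a moment's care are (i) confirming that the maximum degree of each of $\A_{\mathbb{F}}$ and $\A_{\mathbb{F}^+}$ is attained at a vertex of $[7]$ rather than outside it, which holds for all sufficiently large $n$ since the two degrees differ by a multiplicative factor of order $n$; and (ii) the behaviour at $C=\frac{7}{4}$, where the two formulas coincide and the extremal family is no longer unique, consistent with the $C=\frac{7}{4}$ line of Theorem~\ref{main-gen}.
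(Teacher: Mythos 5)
Your proposal is correct and follows the same route as the paper: the paper derives the corollary precisely by computing $d_C(\A_{\mathbb{F}})=(7-3C)\binom{n-7}{r-3}$ and $d_C(\A_{\mathbb{F}^+})=(7-3C)\binom{n-7}{r-3}+(28-16C)\binom{n-7}{r-4}$ and combining these with the characterization of extremal families in Theorem~\ref{main-gen}, including the same equal-degree observation at $C=\frac{7}{4}$. Your binomial bookkeeping (3 lines and 16 admissible quadruples through each Fano point, 28 quadruples total) matches the paper's values exactly.
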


Note that $d_C(\F) = \Theta(n^{r-1})$ for $0\le C<1$, $d_C(\F) = \Theta(n^{r-2})$ for $1\le C<\frac{3}{2}$, and $d_C(\F) = \Theta(n^{r-3})$ for $\frac{3}{2}\le C<\frac{7}{3}$. 
In the concluding remarks (Section~\ref{final-section}),
we sketch an argument similar to the proof of Theorem~\ref{main-gen} that shows $d_C(\F)=O(n^{r-4})$ when $C\geq \frac{7}{3}$, and we provide a construction yielding $d_C(\F) = \Theta(n^{r-4})$ for $\frac{7}{3}\le C<\frac{13}{4}$. This means that Corollary~\ref{cor-bound} 
is best-possible in the sense that the bounds cannot be extended to further values of $C$.

In general we have the following bounds for all $C$.

\begin{restatable}{thm}{thmthree}\label{main-gen-C}
Fix $r\geq 3$ and let $\F \subseteq \binom{[n]}{r}$ be an intersecting family of maximum $C$-weighted diversity $d_C(\F)$.
If $q<r$ is a positive integer with $\frac{q^2+q+1}{q+1} \leq C$, then
\[
d_C(\F) =  O\left(n^{r-q-1}\right).
\]
Furthermore, if there exists a prime power $p<r$ with $C< \frac{p^2+p+1}{p+1}$, then
\[
d_C(\F)=\Omega\left(n^{r-p-1}\right).
\]

\end{restatable}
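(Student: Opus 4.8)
\emph{Proof plan.}
The plan is to treat the two bounds separately; the lower bound is an explicit construction generalizing $\A_\mathbb{K}$ and $\A_\mathbb{F}$ (the cases $p=1$ and $p=2$), while the upper bound runs the Delta-system argument of Theorem~\ref{main-gen} and then converts a fractional-cover bound for intersecting families into a bound on $d_C$.

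For the lower bound, given a prime power $p<r$ I would let $\Pi_p$ be the projective plane of order $p$ (a $(p^2+p+1,p+1,1)$-design, which exists since $p$ is a prime power), place its point set on $[p^2+p+1]\subseteq[n]$, write $\LL$ for its set of lines, and define $\A_{\Pi_p}:=\{E\in\binom{[n]}{r}:E\cap[p^2+p+1]\in\LL\}$. Since any two lines of a projective plane meet, $\A_{\Pi_p}$ is intersecting. A direct count gives $|\A_{\Pi_p}|=(p^2+p+1)\binom{n-p^2-p-1}{r-p-1}$, and since every point lies on exactly $p+1$ lines the maximum degree is, for $n$ large, attained at a point of $\Pi_p$ and equals $(p+1)\binom{n-p^2-p-1}{r-p-1}$ (a vertex outside $[p^2+p+1]$ has degree of lower order). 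Hence
\[
d_C(\A_{\Pi_p})=\bigl(p^2+p+1-C(p+1)\bigr)\binom{n-p^2-p-1}{r-p-1},
\]
which is $\Theta(n^{r-p-1})$ precisely because $C<\tfrac{p^2+p+1}{p+1}$ forces the leading coefficient to be positive (here $r-p-1\ge 0$ as $p<r$). Since the extremal $\F$ satisfies $d_C(\F)\ge d_C(\A_{\Pi_p})$, this gives $d_C(\F)=\Omega(n^{r-p-1})$.

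For the upper bound I would invoke the Delta-system method as in Theorem~\ref{main-gen}: for $n$ large, $\F$ admits a \emph{kernel} $\mathcal{K}$, namely an intersecting family of at most a constant number (depending only on $r$) of sets, each of size at most $r$, such that every edge of $\F$ contains some member of $\mathcal{K}$; that distinct kernel sets must intersect uses that $\F$ is intersecting together with a sunflower with sufficiently many petals at each core. Split $\mathcal{K}=\mathcal{K}_{\le q}\cup\mathcal{K}_{>q}$ by whether $|K|\le q$, and split $\F=\F_{\le q}\sqcup\F_{>q}$ by declaring $E\in\F_{>q}$ if every kernel set inside $E$ has size $>q$. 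Then $|\F_{>q}|=O(n^{r-q-1})$ since each such edge contains one of boundedly many sets of size $\ge q+1$. For $\F_{\le q}$, assign to each $E$ a set $\psi(E)\in\mathcal{K}_{\le q}$ with $\psi(E)\subseteq E$ and partition $\F_{\le q}=\bigsqcup_{K\in\mathcal{K}_{\le q}}\G^K$ with $\G^K=\psi^{-1}(K)$; for $v\in K$ every edge of $\G^K$ contains $v$, so $\deg_\F(v)\ge\sum_{K\in\mathcal{K}_{\le q}:\,v\in K}|\G^K|$. Now $\mathcal{K}_{\le q}$ is intersecting with all edges of size at most $q$, so padding edges with private vertices up to size $q$ (which preserves intersection) and applying Füredi's classical bound for intersecting uniform families, its fractional cover number is at most $\tfrac{q^2-q+1}{q}$. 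Taking an optimal fractional cover $w$ and using $\sum_{v\in K}w_v\ge 1$,
\[
|\F_{\le q}|=\sum_{K}|\G^K|\le\sum_{K}|\G^K|\sum_{v\in K}w_v=\sum_v w_v\sum_{K\ni v}|\G^K|\le\Bigl(\sum_v w_v\Bigr)\Delta(\F)\le\tfrac{q^2-q+1}{q}\,\Delta(\F).
\]
Since $C\ge\tfrac{q^2+q+1}{q+1}>\tfrac{q^2-q+1}{q}$, we get $|\F_{\le q}|-C\Delta(\F)\le\bigl(\tfrac{q^2-q+1}{q}-C\bigr)\Delta(\F)\le 0$, hence $d_C(\F)=|\F_{\le q}|+|\F_{>q}|-C\Delta(\F)\le|\F_{>q}|=O(n^{r-q-1})$.

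The main obstacle is the Delta-system reduction itself: producing a kernel that is simultaneously bounded, intersecting, and a cover of $\F$, \emph{without} any saturation hypothesis (which cannot be assumed here since $C$ may be large). Because $\F$ need not be the full expansion of its kernel, the argument must use only the one-sided facts $|\G^K|\le\binom{n-|K|}{r-|K|}$ and $\deg_{\G^K}(v)=|\G^K|$ for $v\in K$; fortunately these one-sided estimates are exactly what an upper bound on $d_C(\F)$ needs. Secondary points are transferring Füredi's fractional-matching inequality to the non-uniform kernel via the padding trick, and confirming that the leading term of $\Delta(\A_{\Pi_p})$ is carried by a point of the plane.
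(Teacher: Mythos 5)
Your proposal is correct, and its lower bound is exactly the paper's: the same projective-plane family $\A_{\Pi_p}$ with the same degree computation (your extra remark that vertices outside the plane have only lower-order degree is a harmless refinement). The upper bound, however, takes a genuinely different route. The paper argues by a dichotomy on $\tau(\F)$: if $\tau(\F)\le q$, some vertex of a minimum transversal has degree at least $\frac{1}{q}|\F|$, and since $C\ge \frac{q^2+q+1}{q+1}>q$ this already forces $d_C(\F)\le 0$; if $\tau(\F)>q$, every flower-base member is a transversal of $\F$ (Lemma~\ref{inherit}), hence has size at least $q+1$, and (\ref{combo-bound}) gives $|\F|\le r_0\binom{n-q-1}{r-q-1}$. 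You instead make no statement about $\tau(\F)$: you split $\F$ according to whether an edge contains a kernel member of size at most $q$ and charge the small part against $\Delta(\F)$ through a fractional cover of $\mathcal{K}_{\le q}$, invoking F\"uredi's bound $\tau^*\le q-1+\frac{1}{q}$ for intersecting $q$-uniform families. Two remarks on this. First, the padding step should be routed through LP duality: a fractional matching of $\mathcal{K}_{\le q}$ is also one of the padded $q$-uniform family, so $\tau^*(\mathcal{K}_{\le q})=\nu^*(\mathcal{K}_{\le q})\le\tau^*(\text{padded family})$; a fractional cover of the padded family does not literally restrict to one of $\mathcal{K}_{\le q}$, but this standard fix makes your step sound. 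Second, F\"uredi's theorem is overkill for the theorem as stated: since $\mathcal{K}_{\le q}$ is intersecting, putting weight $1$ on each vertex of a single member of size at most $q$ gives $\tau^*(\mathcal{K}_{\le q})\le q<C$, which already closes your estimate, and the paper's $\tau(\F)\le q$ case is simpler still. On the other hand, your route buys something real: the same computation gives $d_C(\F)\le|\F_{>q}|=O(n^{r-q-1})$ for all $C\ge\frac{q^2-q+1}{q}$, a weaker hypothesis than $C\ge\frac{q^2+q+1}{q+1}$, and for $q=3$ this recovers the claim $d_C(\F)=O(n^{r-4})$ for $C\ge\frac{7}{3}$ that the paper only sketches in the concluding remarks.
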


In the next section we introduce our main tool in this paper---a variant of Frankl's Delta-system method we call the \emph{flower base}. In general, these tools force the threshold on $n$ in the theorems to be superexponential in $r$. It seems likely that the correct threshold is polynomial in $r$ as is the case for ordinary diversity.

In Section~\ref{main-section} we prove Theorems~\ref{32-ker-prop}, \ref{main-gen} and \ref{main-gen-C}. In Section~\ref{stability-section}, we prove a stability theorem for ordinary diversity $d(\F)$ and give a new short proof of a stability theorem of Kupavskii and Zakharov~\cite{KuZa} (see also Frankl~\cite{Fr-ekr-deg}) for large $n$.

\section{Preliminaries}

For a family $\F \subseteq 2^{[n]}$ and set
 $S \subset [n]$, the \emph{link} $\F(S)$ is the family
\[
\F(S) := \{F \setminus S \, : \, S \subset F \in \F\}.
\]
The \emph{unlink} $\F(\overline{S})$ is the family
\[
\F(\overline{S}) := \{F\in \F \, : \, F\cap S = \emptyset\}.
\]
Occasionally, it will be allowed that $S = \emptyset$ in which case $\F(S) = \F$. When $S=\{x\}$ is a single element, we write $\F(x)$ and $\F(\overline{x})$ instead of $\F(\{x\})$ and $\F(\overline{\{x\}})$, respectively. For a $2$-vertex set $\{x,y\}$ we also write $\F(xy)$ for $\F({x,y})$.
It is often convenient to partition a family $\F$ by a vertex $x$, i.e.,
\[
|\F| = |\F(x)| + |\F(\overline{x})|.
\]

A \emph{transversal} of a family $\F$ is a set of vertices $T$ such that every edge of $\F$ has a non-empty intersection with $T$. Note that in an intersecting family each edge is necessarily a transversal of the family. The size of a minimum transversal of $\F$ is denoted $\tau(\F)$.

Recall that a pair of families $\A, \B$ are {\it cross-intersecting} if $A \cap B \neq \emptyset$ for all $A \in \A$ and $B \in \B$. A theorem of Hilton and Milner~\cite{HM} (see Simpson~\cite{simp} for a streamlined proof), implies that if $\A,\B \subseteq \binom{[n]}{r}$ are non-empty cross-intersecting families with $n \geq 2r$, then
    \[
    |\A| + |\B| \leq \binom{n}{r} - \binom{n-r}{r} + 1.
    \]
Therefore, if we allow $\A$ or $\B$ to be empty we have
\begin{equation}\label{cross-ineq}
|\A| + |\B| \leq \binom{n}{r}.    
\end{equation}

\subsection{Flower base}

We use the following notion defined in~\cite{jukna}.

\begin{defn}[Flower]
Fix $\alpha \geq 1$.  A \emph{flower with threshold $\alpha$ and core} $Y$ is a family $\mathcal{S} =\{S_1,\dots, S_m\}$ such that $Y=\bigcap_{S\in \mathcal{S}}S$ and the family of \emph{petals} $\mathcal{S}(Y) =\{ S_1 \setminus Y, \dots, S_m \setminus Y\}$ has minimum transversal of size greater than $\alpha$, i.e, $\tau(\mathcal{S}(Y)) > \alpha$.   
\end{defn}

\begin{figure}[H]
\begin{center}
{\includegraphics[scale=1.2]{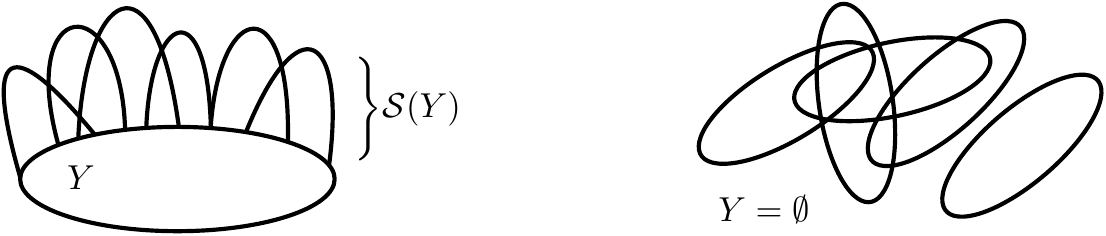}}
\end{center}
\caption{Left, a flower $\mathcal{S}$ with threshold $\alpha$ and core $Y$ has $\tau(\mathcal{S}(Y)) > \alpha$, and right, a flower with core $Y=\emptyset$ and petals of minimum transversal size $3$.}
\end{figure}

A flower is closely related to the better-known \emph{sunflower} which has the requirement that the petals be pairwise disjoint. Indeed, a sunflower with more than $\alpha$ petals and core $Y$ will be a flower with threshold $\alpha$ and core $Y$. The celebrated Erd\H os-Rado sunflower lemma~\cite{sunflower} establishes a Ramsey-like result for the existence of sunflowers in large families. A very similar proof (observed in \cite{HJP}) can be used to prove a counterpart for flowers. A key difference is that the optimal threshold for sunflowers is a famous open problem (see e.g. \cite{alweiss} for a recent breakthrough), but the Erd\H os-Rado argument gives a sharp threshold for flowers with integral threshold. We include a proof here, and sharp construction below, for the sake of completeness.

\begin{lemma}[Flower Lemma \cite{HJP}]\label{flower-lemma}
Fix $\alpha \geq 1$. Let $\F$ be a family of $r$-sets. If $|\F| > \alpha^r$, then $\F$ contains a flower with threshold $\alpha$.
\end{lemma}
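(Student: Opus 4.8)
The plan is to prove the lemma by induction on $r$, following the Erd\H os--Rado sunflower argument but taking advantage of the fact that a flower only requires a large transversal of the petals, not pairwise disjointness of them. For $r=0$ the statement is vacuous, since the only $0$-uniform families are $\emptyset$ and $\{\emptyset\}$, each of size at most $1=\alpha^{0}$. So assume $r\ge 1$ and that the lemma holds for all smaller uniformities, and let $\F$ be a family of $r$-sets with $|\F|>\alpha^{r}$.

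The case split is on whether $\F$ has a vertex of large degree. Suppose first that some vertex $x$ satisfies $|\F(x)|>\alpha^{r-1}$. Then $\F(x)$ is a family of $(r-1)$-sets, so by the induction hypothesis it contains a flower $\s'$ with threshold $\alpha$ and some core $Y'$; note $x\notin Y'$ because no member of the link $\F(x)$ contains $x$. Lifting back, set $\s:=\{\, S\cup\{x\} : S\in\s'\,\}\subseteq\F$. One checks that $\bigcap_{S\in\s}S=Y'\cup\{x\}=:Y$ and that $\s(Y)=\s'(Y')$, hence $\tau(\s(Y))=\tau(\s'(Y'))>\alpha$ and $\s$ is the desired flower in $\F$.

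In the remaining case every vertex satisfies $|\F(x)|\le\alpha^{r-1}$, and I claim $\F$ itself is a flower with core $\emptyset$. First, $\bigcap_{F\in\F}F=\emptyset$: if some $x$ belonged to every edge we would have $|\F(x)|=|\F|>\alpha^{r}\ge\alpha^{r-1}$, contradicting the case assumption. Second, $\tau(\F)>\alpha$: otherwise a minimum transversal $T$ has $|T|=\tau(\F)\le\alpha$, and since every edge meets $T$ we get $|\F|\le\sum_{x\in T}|\F(x)|\le|T|\,\alpha^{r-1}\le\alpha^{r}$, again a contradiction. Since $\F(\emptyset)=\F$, this says exactly that $\F$ is a flower with threshold $\alpha$ and core $\emptyset$, completing the induction.

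There is no serious obstacle in this argument; the only points needing a little care are (i) verifying that the lift $S\mapsto S\cup\{x\}$ preserves the flower structure and correctly tracks the core, and (ii) observing that in the low-degree case the intersection $\bigcap\F$ is automatically empty, so that the flower definition applies with $Y=\emptyset$ without extra work. It is precisely the desire for the sharp bound $\alpha^{r}$ (sharp when $\alpha$ is an integer) that forces this clean two-case analysis rather than a cruder greedy-matching estimate; sharpness itself will be witnessed by the construction following the lemma.
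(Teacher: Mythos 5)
Your proof is correct and is essentially the same Erd\H os--Rado-style induction as the paper's: both reduce to the link of a high-degree vertex found via a small transversal, and both output $\F$ itself as a flower with core $\emptyset$ when no such reduction applies (you merely organize the case split by degree rather than by $\tau(\F)$, which is the same argument in contrapositive form). Your extra checks---that the lifted flower has core $Y'\cup\{x\}$ and that $\bigcap_{F\in\F}F=\emptyset$ in the low-degree case---are fine points the paper passes over quickly, and they are handled correctly.
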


\begin{proof} We proceed by induction on $r$. When $r=1$, we have a family of more than $\alpha$ singletons sets. They are clearly disjoint, so the family itself forms a flower with core $\emptyset$ and threshold $\alpha$. So let $r>1$ and assume the theorem holds for smaller values.

Let $T$ be a minimum transversal of $\F$. If $T>\alpha$, the family $\F$ is again a flower with core $\emptyset$ and threshold $\alpha$. Assume otherwise: now, $T$ is a transversal of $\F$, so it  intersects each member of $\F$. Applying pigeonhole, there exists an $x\in T$ such that the number of edges of $\F$ containing $x$ is at least
\[
\frac{|\F|}{|T|}> \frac{\alpha^r}{\alpha} = \alpha^{r-1}.
\]
Consider now the link $\F(x)$, which is a family of $(r-1)$-sets and contains more than $\alpha^{r-1}$ edges. By induction, it contains a flower of core $Y$ and threshold $\alpha$. Adding $x$ to each edge of this flower gives a flower with core $Y\cap \{x\}$ and threshold $\alpha$ in the original family, as needed.
\end{proof}

We exhibit sharpness of the bound for integer $\alpha$ as follows. Let $\mathcal{M}\subseteq \binom{[\alpha r]}{\alpha}$ be a set of $r$ pairwise vertex-disjoint sets each of size $\alpha$. Define $\F$ as the family of minimal transversals of $\mathcal{M}$. One can easily see that $|\F|=\alpha^r$. Let $Y$ be the core of any flower in $\F$. Observe that $|Y|<r$, so the core of this flower $Y$ is disjoint from some member $M \in \mathcal{M}$. But $M$ is of size $\alpha$ and is a transversal of $\F$, so it must intersect each petal of the flower with core $Y$, i.e., $\F$ has no flower with threshold $\alpha$.

We now define our main tool for this paper. It is a variant of the Delta-system method developed by Frankl (see \cite{frankl-delta}) that incorporates transversal analysis. The Delta-system method leverages sunflowers. Instead, leveraging flowers tends to provide an improved threshold on $n$ (although still typically superexponential). We employ notation in this paper that is reminiscent to that used for Delta-systems (see \cite{KoMu} for a recent application).

\begin{defn}[Flower base]
Let $\F \subseteq \binom{[n]}{r}$ and fix $\alpha \geq \tau(\F)$. 
Define $\F^*$ to be the family of subsets non-empty  $Y \subset [n]$ such that $Y$ is the core of a flower of threshold $\alpha$ in $\F$. The \emph{flower base (of threshold $\alpha$)} is the family $\mathfrak{B}\F$ of inclusion-minimal members of $\F^* \cup \F$.
\end{defn}

Typically we are interested in the flower base with threshold $\alpha$ a function of $r$. For example, if $\F$ is intersecting, then often we set $\alpha : = r \geq \tau(\F)$. In a forthcoming manuscript~\cite{MPW} we show further applications of this method that arise from selecting different values of $\alpha$.

The goal of the flower base is to provide a minimal encoding of a family.  However, some edges of the family may not be a member of a flower, so taking only the inclusion-minimal members of $\F^*$ may ignore some members of $\F$. We illustrate this as follows.
Let $r\ge 3$ and $n\ge 2r$ and put $[2,r+1]:=\{2,3,\dots, r+1\}$. Then the Hilton-Milner family

\[
\F := \{
F \in \binom{[n]}{r} : 1 \in F, F \cap [2,r+1] \neq \emptyset
\}
\cup
[2,r+1],
\]
has flower base 
\[
\mathfrak{B}\F =
\{
\{1, j\} : 2 \le j \le r+1
\}
\cup
[2,r+1].
\]

Note that the set $[2,r+1]$ appears in the flower base by virtue of taking inclusion-minimal elements from $\F^* \cup \F$ instead of $\F^*$ alone.

Observe that a member in the flower base $\mathfrak{B}\F$ is either an edge of $\F$ or a set $Y$ that is the core of a flower formed by edges of $\F$.
Therefore, every edge of $\F$ contains some member of $\mathfrak{B}\F$, so the following frequently useful bound on $|\F|$ is immediate:

\begin{equation}\label{b-bound}
    |\F|  \leq \sum_{B \in \mathfrak{B}\F} \binom{n-|B|}{r-|B|}.
\end{equation}

Much of the versatility of the flower base comes from the inheritance of many useful properties of the original family (e.g.\ intersecting). 

\begin{lemma}[Inheritance Lemma]\label{inherit}
Let $\F \subseteq \binom{[n]}{r}$ be a family with flower base $\mathfrak{B}\F$ of threshold $\alpha \geq r$. 

\begin{enumerate}
 \item[(1)] $\mathfrak{B}\F$ is Sperner, i.e., no member of $\mathfrak{B}\F$ contains another.
 \item[(2)] If $\F$ is intersecting, then $\mathfrak{B}\F$ is intersecting. Moreover, each $B \in \mathfrak{B}\F$ is a transversal of $\F$.
\item[(3)] The minimum transversal size of $\mathfrak{B}\F$ satisfies $\tau(\mathfrak{B}\F) = \tau(\F)$.
\end{enumerate}

\end{lemma}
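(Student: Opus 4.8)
The plan is to verify the three properties in the stated order, with Parts~(2) and (3) both resting on a single recurring observation about flower cores.

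\textbf{Part (1)} is immediate from the definition: $\mathfrak{B}\F$ is the set of inclusion-minimal members of $\F^* \cup \F$, and the minimal members of any set system form an antichain—if $B_1 \subseteq B_2$ with both minimal, minimality of $B_2$ forces $B_1 = B_2$. For Parts~(2) and (3) I would first isolate the following claim. \emph{Let $B \in \mathfrak{B}\F$ with $B \in \F^*$, say $B = Y$ is the core of a flower $\mathcal{S} \subseteq \F$ of threshold $\alpha$, and let $Z \subseteq [n]$ be any transversal of $\F$ with $|Z| \le \alpha$. Then $Z \cap B \neq \emptyset$.} Indeed, if $Z \cap B = \emptyset$, then since $\mathcal{S} \subseteq \F$ the set $Z$ meets every $S \in \mathcal{S}$, and as $Z$ avoids $B \subseteq S$ it must meet the petal $S \setminus B$; hence $Z$ is a transversal of $\mathcal{S}(B)$, giving $\tau(\mathcal{S}(B)) \le |Z| \le \alpha$, contradicting $\tau(\mathcal{S}(B)) > \alpha$. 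Two sources of admissible $Z$ will be used: any single edge $F \in \F$ is a transversal of $\F$ with $|F| = r \le \alpha$; and any flower core $B$ itself satisfies $|B| < r \le \alpha$, since the petals of a flower are non-empty (their transversal number exceeds $\alpha \ge 1$), forcing $B \subsetneq S$ for the edges $S$ of the flower.

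\textbf{Part (2).} First, every $B \in \mathfrak{B}\F$ is a transversal of $\F$: if $B \in \F$ this is just the intersecting property of $\F$; otherwise $B \in \F^*$ is a flower core, and applying the claim with $Z = F$ for each edge $F \in \F$ shows $F \cap B \neq \emptyset$ for all $F \in \F$. Next, $\mathfrak{B}\F$ is intersecting: take $B_1, B_2 \in \mathfrak{B}\F$. If both lie in $\F$ they meet because $\F$ is intersecting. If $B_1 \in \F$ and $B_2 \in \F^*$, then $B_1 \cap B_2 \neq \emptyset$ because $B_2$ is a transversal of $\F$ (just shown). If both $B_1, B_2 \in \F^*$, use that $B_1$ is a transversal of $\F$ with $|B_1| < r \le \alpha$ and apply the claim to $B_2$ with $Z = B_1$.

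\textbf{Part (3).} Since every edge of $\F$ contains some member of $\mathfrak{B}\F$, any transversal $T$ of $\mathfrak{B}\F$ satisfies $T \cap F \supseteq T \cap B \neq \emptyset$ for the $B \in \mathfrak{B}\F$ contained in $F$, so $T$ is a transversal of $\F$; hence $\tau(\F) \le \tau(\mathfrak{B}\F)$. Conversely, let $T$ be a minimum transversal of $\F$, so $|T| = \tau(\F) \le r \le \alpha$. For each $B \in \mathfrak{B}\F$: if $B \in \F$ then $T \cap B \neq \emptyset$ since $T$ is a transversal of $\F$; if $B \in \F^*$, the claim with $Z = T$ gives $T \cap B \neq \emptyset$. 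Thus $T$ is a transversal of $\mathfrak{B}\F$, so $\tau(\mathfrak{B}\F) \le |T| = \tau(\F)$, and equality follows. The only genuinely delicate point is the case of Part~(2) where $B_1$ and $B_2$ are both flower cores: there is no edge on hand to intersect the two cores directly, so one bootstraps, using that a flower core is itself a transversal of $\F$ of size at most $\alpha$ and feeding it into the petal-transversal argument; keeping the size inequalities $|F| = r \le \alpha$ and $|B| < r \le \alpha$ aligned against the threshold $\tau(\mathcal{S}(B)) > \alpha$ is what makes everything fit.
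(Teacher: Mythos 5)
Your proof is correct and follows essentially the same route as the paper: the key mechanism in both is that a set of size at most $\alpha$ disjoint from a flower core would have to be a transversal of its petals, contradicting the threshold, and both directions of part (3) are handled identically. The only difference is organizational---you establish that every member of $\mathfrak{B}\F$ is a transversal of $\F$ first and then get the intersecting property in one application of your claim, whereas the paper proves intersecting first via a two-step bootstrap (finding an edge through one core avoiding the other, then an edge through the second avoiding that one) and transversality afterwards; nothing essential changes.
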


\begin{proof}
 The flower base $\mathfrak{B}\F$ consists of inclusion-minimal sets, so it is Sperner by definition.  In order to prove (2), suppose that $\mathfrak{B}\F$ contains disjoint sets $B_1,B_2$. As $\F$ is intersecting we may assume at least one of $B_1,B_2$, say $B_1$, is not an edge of $\F$, so $B_1$ is the core of a flower of threshold $\alpha$.
 There are at most $r \leq \alpha$ vertices in $B_2$, so $B_2$ is not a transversal of the petals on $B_1$. Therefore, there is an $F \in \F$ that contains $B_1$ and is disjoint from $B_2$.
 If $B_2 \in \F$, then $B_2$ and $F$ are disjoint edges of $\F$, a contradiction. Therefore, $B_2$ is the core of a flower with threshold $\alpha$. Again, there are at most $r \leq \alpha$ vertices in $F$, so $F$ is not a transversal of the petals on $B_2$. Therefore, there is an $F' \in \F$ that contains $B_2$ and is disjoint from $F$, a contradiction.

Now fix $B \in \mathfrak{B}\F$. The petals on $B$ have minimum transversal greater than $\alpha \geq r$, so no edge of $\F$ can be a transversal of these petals. Therefore, as $\F$ is intersecting, each edge intersects $B$, i.e., $B$ is a transversal of $\F$.
 
For (3), simply observe that since a transversal of $\mathfrak{B}\F$ is a transversal of the family $\F$ we have $\tau(\mathfrak{B}\F) \geq \tau(\F)$. On the other hand, observe that a minimum transversal of $\F$ must intersect each $B \in \mathfrak{B}\F$ as $B$ is either an edge of $\F$ or the core of a flower whose petals have transversal greater than $\alpha \geq r \geq \tau(\F)$.
\end{proof}

The following lemma is at the heart of establishing bounds via the flower base. With it, we have that the flower base is of constant size (as long as the minimum transversal size is constant, e.g.\ when the family is intersecting). The proof is an adaptation of a proof  by Frankl~\cite{frankl-delta} of a corresponding result. 
Let $\mathfrak{B}_i\F$ be the members of $\mathfrak{B}\F$ of cardinality $i$, i.e.,
\[
\mathfrak{B}_i\F:=\{B\in \mathfrak{B}\F:|B|=i\}.
\]

\begin{lemma}\label{size-lemma}
Let $\F \subseteq \binom{[n]}{r}$ be a family.
If $\mathfrak{B}\F$ is a flower base with threshold $\alpha \geq \tau(\F)$, then
\[
|\mathfrak{B}\F| \leq  r\alpha^{r}.
\]
\end{lemma}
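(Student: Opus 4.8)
The plan is to bound $|\mathfrak{B}_i\F|$ separately for each $i$ with $1 \le i \le r$ by showing each layer has at most $\alpha^r$ members, and then sum over the $r$ possible cardinalities. First I would fix a cardinality $i$ and argue by induction on $i$ (or equivalently by building up a transversal greedily). The starting observation is that every member $B \in \mathfrak{B}\F$ is either an edge of $\F$ or the core of a flower of threshold $\alpha$ in $\F$; in the latter case the petals over $B$ have transversal size greater than $\alpha$, and in particular there are more than $\alpha$ edges of $\F$ containing $B$ with pairwise ``spread'' petals. The key structural fact to exploit is a Bollob\'as-type / sunflower-counting argument: inside any fixed layer $\mathfrak{B}_i\F$, one cannot have too many minimal sets, because a minimal member $B$ cannot contain any other member of the flower base, so the links $\F(B)$ as $B$ ranges over $\mathfrak{B}_i\F$ are suitably independent.

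The heart of the argument mimics Frankl's Delta-system bound. Concretely, I would proceed as follows. Process the members of $\mathfrak{B}\F$ in order of increasing cardinality. Maintaining the flower-base members already ``charged,'' consider a new member $B$ of cardinality $i$: since $B$ is minimal, no previously listed member of cardinality $<i$ is a subset of $B$, so for each vertex $x$ one can ask how many members of $\mathfrak{B}_i\F$ contain $x$; if some vertex lies in more than $\alpha^{i-1}$ of them, then restricting (taking links at $x$) reduces to a layer-$(i-1)$ situation inside a family of $(r-1)$-sets. The Flower Lemma (Lemma~\ref{flower-lemma}) applied to $\F(x)$ — or rather to the petals — is what forces a flower, hence a strictly smaller core, contradicting minimality of the members of $\mathfrak{B}_i\F$ through $x$. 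This gives a per-vertex bound of $\alpha^{i-1}$, and summing over the at most (effectively) $\alpha$ vertices one needs to ``cover'' the layer — using that $\tau(\mathfrak{B}\F)=\tau(\F)\le\alpha$ from Lemma~\ref{inherit}(3) — yields $|\mathfrak{B}_i\F|\le \alpha^i \le \alpha^r$. Summing over $i = 1,\dots,r$ gives $|\mathfrak{B}\F|\le r\alpha^r$.

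The step I expect to be the main obstacle is making the greedy/inductive ``charging'' rigorous: one must be careful that taking links at a high-degree vertex $x$ genuinely produces, inside $\F(x)$, a family whose flower base (at the same threshold $\alpha$) has a layer of size exceeding $\alpha^{i-1}$, and that a flower found there lifts back to a flower in $\F$ whose core is a \emph{proper} subset of some member of $\mathfrak{B}_i\F$ — the properness (strict containment) is what contradicts minimality and hence Sperner-ness from Lemma~\ref{inherit}(1). Handling the boundary case $i=1$ is immediate (a layer of singleton cores that is an antichain of size $>\alpha$ would itself be a flower core $\emptyset$ contradiction, so $|\mathfrak{B}_1\F|\le\alpha$), and the base of the induction on $r$ is likewise trivial. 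Once the link-and-lift bookkeeping is set up cleanly, the arithmetic $\sum_{i=1}^{r}\alpha^i \le r\alpha^r$ is routine.
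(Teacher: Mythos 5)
Your overall scaffold (bound each layer $\mathfrak{B}_i\F$, find too many base members through a point, extract a flower via Lemma~\ref{flower-lemma}, and contradict minimality) runs parallel to the paper's argument, but it stops short of the one step that actually carries the proof, and you yourself flag it as ``the main obstacle'' without resolving it: the lift from a flower \emph{among base elements} to a flower \emph{in $\F$}. Applying the Flower Lemma to $\mathfrak{B}_i\F$ (or to its link at a high-degree vertex $x$) only produces a family $\mathcal{S}\subseteq\mathfrak{B}_i\F$ with core $Y$ and $\tau(\mathcal{S}(Y))>\alpha$. Members of $\mathfrak{B}_i\F$ need not be edges of $\F$ -- they may themselves be cores of flowers -- so this is not yet a flower in $\F$, and hence $Y$ is not yet known to lie in $\F^*$; without that, no proper subset of a member of $\mathfrak{B}_i\F$ has been exhibited in $\F^*\cup\F$, and the Sperner/minimality contradiction does not follow. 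The paper closes exactly this gap: letting $\mathcal{S}'$ be the edges of $\F$ containing some member of $\mathcal{S}$ (all of which contain $Y$), take a minimum transversal $T$ of $\mathcal{S}'(Y)$; either $T$ meets every petal of $\mathcal{S}$ over $Y$, so $|T|>\alpha$, or $T$ misses some $B\in\mathcal{S}$, and then, since every edge of $\F$ containing $B$ lies in $\mathcal{S}'$, $T$ must be a transversal of the petals over $B$, again forcing $|T|>\alpha$. Either way $\mathcal{S}'$ witnesses $Y$ as a flower core in $\F$, so (using $\alpha\ge\tau(\F)$ to rule out $Y=\emptyset$) some member of $\mathfrak{B}\F$ lies strictly inside a member of $\mathfrak{B}_i\F$, violating Lemma~\ref{inherit}(1). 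Your proposal contains no substitute for this two-case transversal argument, so as written it is incomplete.

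A secondary remark: the per-vertex, per-layer bookkeeping you propose (degree at most $\alpha^{i-1}$ at each vertex of a transversal of size at most $\alpha$, giving $|\mathfrak{B}_i\F|\le\alpha^i$) is not needed and adds its own delicate points, e.g.\ relating flower bases of $\F$ and of $\F(x)$. The paper avoids all of this by pigeonholing directly: if $|\mathfrak{B}\F|>r\alpha^r$ then some layer satisfies $|\mathfrak{B}_i\F|>\alpha^r\ge\alpha^i$, and the Flower Lemma is applied to that layer as a family of $i$-sets with no links taken. Your variant would yield the marginally better bound $\sum_{i=1}^r\alpha^i$ if completed, but completing it still requires precisely the lifting argument above (the nonemptiness of the core, which you get for free since it contains $x$, is the only simplification your route buys).
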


\begin{proof}
 Suppose for the sake of a contradiction that $|\mathfrak{B}\F| > r\alpha^{r}$. Then $|\mathfrak{B}_i\F| > 
 \alpha^{r} \geq \alpha^i$ for some $1 \leq  i \leq r$. 
 Applying the Flower Lemma (Lemma~\ref{flower-lemma}) to $\mathfrak{B}_i\F$ gives a flower $\mathcal{S} \subseteq \mathfrak{B}_i\F$ with threshold $\alpha$ and core $Y$, i.e, $\tau(\mathcal{S}(Y)) > \alpha$.

Let $\mathcal{S}'$ be the edges of $\F$ that contain a member of the flower $\mathcal{S}$. Each edge of $\mathcal{S}'$ necessarily contains $Y$. 
Let $T$ be a minimum transversal of $\mathcal{S}'(Y)$. 
If $T$ is also a transversal of $\mathcal{S}(Y)$, then $|T|>\alpha$. Otherwise, there is a $B \in \mathcal{S}$ that is disjoint from $T$. 
The edges of $\F$ that contain $B$ are in $\mathcal{S}'$, so then
 $T$ must be a transversal of the petals on $B$, i.e., $|T|>\alpha$. In either case, this implies that $\mathcal{S'}$ is a flower with core $Y$ and threshold at least $\alpha$. 
Note that $Y \neq \emptyset$, as otherwise $\tau(\F) \geq \tau(\mathcal{S'}) > \alpha$, a contradiction.
Therefore, $Y \in \mathfrak{B}\F$. However, $Y \subsetneq B \in \mathfrak{B}_i\F$, so this violates the Sperner property of the flower base $\mathfrak{B}\F$.
\end{proof}

Throughout this paper, $\F \subseteq \binom{[n]}{r}$ is intersecting, thus $r \geq \tau(\F)$. So we will always use the flower base $\mathfrak{B}\F$ with threshold $\alpha = r$. For ease of notation, denote by $r_0 := r^{r+1}$ the bound given by Lemma~\ref{size-lemma}. Combining (\ref{b-bound}) and Lemma~\ref{size-lemma} we have the following bound on $|\F|$ for any $1 \leq k \leq r$,
\begin{equation}\label{combo-bound}
|\F| \leq \sum_{i=1}^{k-1} |\mathfrak{B}_i\F| \binom{n-i}{r-i} + r_0\binom{n-k}{r-k}.
\end{equation}

\section{Generalized diversity}\label{main-section}

Observe the immediate and often useful bound on diversity for all non-empty $S \subset [n]$:
\begin{equation}\label{link-bound}
d_C(\F) \leq  |\F| - C  |\F(S)|.    
\end{equation}

We also use the following lemma.

\begin{lemma}\label{ker-lemma}
     If $\F'\subseteq \F$ has a vertex of degree at least $\frac{1}{C}|\F'|$, then  
     \[
     d_C(\F) \leq |\F \setminus \F'|.
     \]
     In particular, if $\F$ has a vertex of degree at least $\frac{1}{C}|\F|$, then $d_C(\F) \leq 0$.
\end{lemma}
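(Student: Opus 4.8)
The plan is an immediate computation straight from the definitions, with the only subtlety being to keep track of which family the degree hypothesis refers to. First I would fix the vertex $x$ of $\F'$ guaranteed by the hypothesis, so that the number of edges of $\F'$ containing $x$ is at least $\frac{1}{C}|\F'|$. Since $\F' \subseteq \F$, every edge of $\F'$ through $x$ is also an edge of $\F$ through $x$, so the degree of $x$ in $\F$ is at least $\frac1C|\F'|$, and in particular $\Delta(\F) \ge \frac1C|\F'|$. Substituting this into the definition of the $C$-weighted diversity gives
\[
d_C(\F) \;=\; |\F| - C\cdot\Delta(\F) \;\le\; |\F| - C\cdot\tfrac1C|\F'| \;=\; |\F| - |\F'| \;=\; |\F \setminus \F'|,
\]
where the last equality uses $\F' \subseteq \F$. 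The ``in particular'' statement then follows by applying this with $\F' = \F$, since in that case $|\F \setminus \F'| = 0$.

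I do not anticipate any genuine obstacle here; this is essentially a one-line estimate. The two points worth stating carefully are that the degree condition is imposed relative to $\F'$ (so that one invokes the monotonicity of the degree count when passing to the larger family $\F$ to lower-bound $\Delta(\F)$), and that $C > 0$ so that dividing by $C$ is legitimate — which is the case throughout the ranges of $C$ under consideration in this paper.
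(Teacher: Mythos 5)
Your proof is correct and is essentially the same one-line computation as the paper's: lower-bound $\Delta(\F)$ by the degree of the given vertex (which passes from $\F'$ to $\F$ by monotonicity), then use $|\F| = |\F\setminus\F'| + |\F'|$ to conclude $d_C(\F) \le |\F\setminus\F'|$, with the ``in particular'' clause obtained by taking $\F' = \F$. No gaps.
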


\begin{proof}
Immediately,
\[
d_C(\F) =|\F| - C \cdot  \Delta(\F) \leq  |\F \setminus \F'| + |\F'| -  C  \frac{1}{C} |\F'| =  |\F \setminus \F'|. \qedhere
\]
\end{proof}

A common use of Lemma~\ref{ker-lemma} is to conclude that $d_C(\F) \leq 0$ when $C\geq \frac{3}{2}$ and $\F$ has a vertex in $\frac{2}{3}$ of its edges.

We now prove the first extension of the original diversity bound (for large $n$).

\mainprop*

\begin{proof}
Let $\F \subseteq \binom{[n]}{r}$ be an intersecting family of maximum $C$-weighted diversity. We may assume $d_C(\F) \geq d_C(\A_\mathbb{K}) = (3-2C) \binom{n-3}{r-2}$.
Let $\mathfrak{B}\F$ be the flower base of threshold $r$.
If $\mathfrak{B}\F$ contains a singleton set $\{x\}$, then by Lemma~\ref{inherit}, 
every edge of $\F$ contains $x$, i.e., $\F$ has diversity at most $0$. If $\mathfrak{B}\F$ has no member of size $2$, then (\ref{combo-bound}) gives
\[
d_C(\F) <|\F| \leq r_0 \binom{n-3}{r-3} < (3-2C)\binom{n-3}{r-2} = d_C(\mathcal{A}_\mathbb{K}),
\]
for $n$ large enough, a contradiction.
Therefore, $\mathfrak{B}\F$ contains a member of size $2$, i.e., $\mathfrak{B}_2\F \neq \emptyset$. Suppose that the members $\mathfrak{B}_2\F$ have a common element $x$. Then (\ref{combo-bound}) gives
\begin{align*}
d_C(\F) & \leq |\F| - C  |\F(x)| \\
&\leq |\F(x)| +  r_0 \binom{n-3}{r-3} - C  |\F(x)| \leq r_0 \binom{n-3}{r-3}  < (3-2C)\binom{n-3}{r-2} = d_C(\mathcal{A}_\mathbb{K}),
\end{align*}
for $n$ large enough, a contradiction.

If the members of $\mathfrak{B}_2\F$ have no common vertex, then
as $\mathfrak{B}\F$ is intersecting, $\mathfrak{B}_2\F$ must form a triangle. 
As each member of $\mathfrak{B}_2\F$ is a transversal of $\F$, each edge of $\F$ contains at least two vertices in $\mathfrak{B}_2\F$. Let $\F_2$ be the edges of $\F$ that contain exactly two vertices in $\mathfrak{B}_2\F$ and let $\F_3$ be the edges of $\F$ that contain all three vertices in $\mathfrak{B}_2\F$.

By pigeonhole principle, there is a vertex $x$ in $\mathfrak{B}_2\F$ contained in at least $\frac{2}{3}$ of the edges of $\F_2$.
Therefore,
\begin{align*}
d_C(\F) \leq |\F_2|+|\F_3| - C  \frac{2}{3} |\F_2| - C |\F_3| 
= \left(1-\frac{2}{3}C\right)|\F_2| +(1-C)|\F_3|. 
\end{align*}
Observe that $|\F_2| \leq 3\binom{n-3}{r-2}$ and $(1-C)|\F_3| \leq 0$ for $C \geq 1$. In order to satisfy $d_C(\F) \geq d_C(\A_\mathbb{K}) = (3-2C) \binom{n-3}{r-2}$ we must have $|\F_2| = 3\binom{n-3}{r-2}$, i.e., $\mathcal{A}_\mathbb{K} \subseteq  \mathcal{F}$. When $C>1$, we have $\F_3 = \emptyset$ which implies $\mathcal{A}_\mathbb{K} = \mathcal{F}$ and when $C=1$, there is no restriction on $\F_3$ which implies $\mathcal{A}_\mathbb{K} \subseteq  \mathcal{F} \subseteq \mathcal{A}_{\mathbb{K}^+}$. The second part of the theorem follows as any edge in $\F_3$ does not impact the value of $d_C(\F)$ when $C=1$ and $\mathcal{A}_\mathbb{K} \subseteq  \mathcal{F}$.
\end{proof}

We now prepare to prove Theorem~\ref{main-gen}. Very roughly the argument follows that of the proof of Theorem~\ref{32-ker-prop}.
A key step is to connect the $C$-weighted diversity of $\F$ and the $C$-weighted diversity of its flower base and then apply a lemma on the diversity of a ``small'' family constructed from the flower base.

First we need part of the following folklore result. 
This is a special case of the problem to maximize the size of an intersecting $r$-uniform family with minimum transversal of size $r$ (see \cite{zak} for recent progress on the general problem and \cite{FOT} for more on the $r=3$ case).
We include a proof in the Appendix as we could not find a full proof articulated in the literature.

For a family $\B$, by $V(\B)$ we mean the set of vertices spanned by the edges of $\B$, i.e., we ignore potential isolated vertices.

\begin{restatable}[Folklore]{prop}{folkprop}\label{folk}
If $\B$ is an intersecting $3$-uniform family with minimum transversal size $3$, then
\[
|\B| \leq 10 \text{ and } |V(\B)| \leq 7.
\]
\end{restatable}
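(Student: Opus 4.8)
The plan is to bound $|V(\B)|$ first and then $|\B|$. Write $m := |V(\B)|$ and note that since $\tau(\B) = 3$ there is no vertex meeting all edges, so for every $v \in V(\B)$ there is an edge avoiding $v$. I would start by exploiting the following standard fact about $3$-uniform intersecting families with $\tau = 3$: for any edge $A = \{a_1,a_2,a_3\} \in \B$, every other edge meets $A$, and the three ``links'' $\B(a_i) \setminus (\text{edges through another } a_j)$ together with $A$ itself cover $\B$; moreover no $a_i$ is a transversal on its own. The key structural move is to fix an edge $A \in \B$ and a second edge $B$ meeting $A$ in exactly one vertex (such a $B$ exists, else $\tau(\B) \le 2$): after relabelling $A = \{1,2,3\}$, $B = \{1,4,5\}$. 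Then since $\{1\}$ is not a transversal, some edge $C$ avoids $1$, so $C$ meets both $\{2,3\}$ and $\{4,5\}$; WLOG $C = \{2,4,x\}$ for some vertex $x$. Now I would argue that every further edge must intersect each of $A, B, C$, and a short case analysis on how an edge can meet these three sets (which between them span at most $6$ vertices, $\{1,2,3,4,5,x\}$) forces any edge to lie inside a bounded vertex set; pushing this through should give $|V(\B)| \le 7$, with the Fano plane showing $7$ is attained.

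For the bound $|\B| \le 10$, once $|V(\B)| \le 7$ is known I would proceed by a degree/counting argument. Let $d_v$ denote the degree of vertex $v$. Since $\tau(\B)=3$, in particular $\Delta(\B) \le |\B| - 1$ is far from tight; more usefully, I would use that for the minimum transversal $T = \{1,2,3\}$ (which we may take to be an edge if an edge with that vertex set exists — in general just a transversal), every edge meets $T$, so $|\B| \le d_1 + d_2 + d_3$. If I can show each $d_v \le 4$ on at most $7$ vertices, together with the fact that no two vertices suffice as a transversal, I should be able to squeeze out $|\B| \le 10$. The cleanest route is probably: show that if some vertex has degree $\ge 5$ then, after deleting it, the remaining $\le$ few edges plus a transversality constraint force a contradiction with $\tau = 3$; this reduces to a finite check on a $7$-vertex ground set. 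Alternatively, since $\B$ restricted to $7$ vertices is an intersecting $3$-graph, one can invoke that the maximum intersecting $3$-graph on $7$ vertices with $\tau = 3$ is exactly the Fano plane (a classical fact), which gives both $|\B| \le 7 < 10$ and $|V(\B)| \le 7$ simultaneously when $\B$ uses exactly $7$ vertices; the slack up to $10$ presumably comes from allowing $\tau = 3$ families on $6$ vertices that are not linear (e.g. truncated projective-plane-like configurations), so the extremal count $10$ likely occurs on $6$ vertices and I would identify that configuration explicitly.

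The main obstacle I anticipate is the case analysis organizing \emph{which} configurations of three pairwise-intersecting edges $A, B, C$ with $\tau(\{A,B,C\})$ possibly less than $3$ can occur, and then controlling all edges relative to them without an explosion of cases. The subtlety is that $\B$ having $\tau = 3$ does not mean every three edges have a large transversal; one must choose the three reference edges carefully (e.g. $A$, then $B$ with $|A \cap B| = 1$, then $C$ avoiding a chosen vertex of $A \cap B$) so that their union is small and their combined transversality constraints are rigid. A secondary annoyance is handling the borderline between the $6$-vertex extremal example (for the count $10$) and the $7$-vertex Fano example (for the vertex bound $7$) — these are different extremizers, so the two inequalities genuinely need slightly different endgames. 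I would expect to lean on the finiteness of the ground set ($\le 7$ vertices) to finish both by a bounded, if slightly tedious, enumeration rather than a slick unified argument.
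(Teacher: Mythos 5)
Your plan for the vertex bound is only a sketch at the point where the real work starts. After fixing $A=\{1,2,3\}$, $B=\{1,4,5\}$ and an edge $C$ avoiding $1$, what follows immediately is that $A\cap B\cap C=\emptyset$, so every edge of $\B$ has at least two vertices inside the at most six vertices of $A\cup B\cup C$ and hence at most one ``private'' vertex outside. This by itself does not bound $|V(\B)|$: a priori many edges could each carry a distinct private vertex, and ruling that out requires analysing which pairs inside $A\cup B\cup C$ can serve as the two-point part of such edges and how $\tau(\B)=3$ limits the private vertices attached to each pair (for instance, if edges $24y,24z,\dots$ all appear, an edge avoiding $2$ and $4$ must contain every such $y,z,\dots$). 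That is precisely the structural analysis the paper performs through the link families $\B_1,\B_2,\B_3$ (pairs in the links of $1,2,3$ avoiding the other two vertices of the fixed edge $123$) and $\B_{12},\B_{13},\B_{23}$, using that these are pairwise cross-intersecting; your phrase ``pushing this through should give $|V(\B)|\le 7$'' is where the content of the proposition lives, and it is missing.

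Both of your proposed routes to $|\B|\le 10$ fail. For the degree route, the hoped-for bound $d_v\le 4$ is false: the extremal configuration arising in the paper's Case (c), with edges $123,145,167,246,257,356,156,256,125,126$, is intersecting with $\tau=3$ and vertex $1$ has degree $6$; and even if each vertex of the transversal $\{1,2,3\}$ had degree at most $4$, the estimate $|\B|\le d_1+d_2+d_3$ only gives $12$ unless you correct for edges counted more than once. The ``classical fact'' invoked in your alternative route is also false: the same ten-edge family spans exactly $7$ vertices, has $\tau=3$, and is not (and does not embed in) the Fano plane, so the maximum intersecting $3$-graph on $7$ vertices with $\tau=3$ is not the Fano plane; the genuine classical fact in this neighbourhood --- that the maximum is $10$ --- is exactly the statement being proved, so citing it would be circular. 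The paper instead obtains both bounds simultaneously by a case analysis on the links of the fixed edge $123$: if some $\B_i$ contains two disjoint pairs one is forced into the $7$-vertex configuration above with at most $10$ edges, and if all $\B_i$ are intersecting their union is a triangle (all edges inside a $6$-set with no complementary pair, giving $\frac{1}{2}\binom{6}{3}=10$) or a star (giving at most $1+3+6=10$). Your write-up needs this, or an equivalent finite analysis, to close both halves; note also that the extremal count $10$ is attained on $7$ vertices as well as on $6$, contrary to your guess.
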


Let $\B$ be a family and $\varrho : \B \to (0,1]$ be a ``density'' function on the members of $\B$. Define the {\it mass} of the family $\B$ as
\[
\varrho(\B) := \sum_{B \in \B} \varrho(B).
\]
The {\it $\varrho$-degree} of a vertex $x$ is defined as
\[
\varrho(x) := \sum_{\substack{B \in \B \\ x \in B}} \varrho(B),
\]
so the maximum $\varrho$-degree is
\[
\Delta_\varrho(\B) := \max_{x \in V(\B)} \varrho(x).
\]
The notion of diversity generalizes naturally to this setting as
\[
\varrho(\B) - C \cdot  \Delta_\varrho(\B)
\]
for a constant $C$. Note that when $\varrho(B)=1$ for all $B \in \B$ we recover the original definition of $C$-weighted diversity, i.e., $|\B| - C \cdot \Delta(\B) = \varrho(\B) - C \cdot  \Delta_\varrho(\B)$.
This parameter is likely interesting in its own right, but we only use it to establish an intermediate lemma on the way to the proof of Theorem~\ref{main-gen}.

\begin{lemma}\label{baby-lemma}
Fix $\frac{3}{2} \leq C <\frac{7}{3}$.
 Let $\B$ be an intersecting $3$-uniform $7$-vertex family with minimum transversal size $3$. 
 Let $\varrho : \B \to (0,1]$ be a function on the members of $\B$ such that $\varrho(\B) 
 > 6$
 and
 \[
 \varrho(\B) - C \cdot  \Delta_\varrho(\B) > 6 - 3C. 
 \]
 Then $\B$ is the Fano plane (i.e.\ the $(7,3,1)$-design).
\end{lemma}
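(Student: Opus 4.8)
The plan is to classify the intersecting $3$-uniform $7$-vertex families $\B$ with $\tau(\B)=3$ and show that, among all of them, only the Fano plane simultaneously satisfies $\varrho(\B)>6$ and $\varrho(\B)-C\cdot\Delta_\varrho(\B)>6-3C$ for some density function $\varrho\colon\B\to(0,1]$. By Proposition~\ref{folk} we already know $|\B|\le 10$ and $|V(\B)|\le 7$; the hypothesis pins $|V(\B)|=7$ exactly (an $\le 6$-vertex family cannot have transversal number $3$ and still be interesting here — in fact the Fano plane is the unique such family on exactly $7$ vertices achieving $|\B|=7$, and we must rule out the ``smaller'' or ``non-regular'' candidates). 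First I would set up the key averaging inequality: since $\sum_{x\in V(\B)}\varrho(x)=3\varrho(\B)$ and $|V(\B)|=7$, there is a vertex with $\varrho$-degree at least $\tfrac{3}{7}\varrho(\B)$, so $\Delta_\varrho(\B)\ge \tfrac37\varrho(\B)$. Plugging this into the diversity hypothesis gives
\[
6-3C < \varrho(\B)-C\cdot\Delta_\varrho(\B) \le \varrho(\B)-\tfrac{3C}{7}\varrho(\B) = \Bigl(1-\tfrac{3C}{7}\Bigr)\varrho(\B),
\]
and since $C<\tfrac73$ the coefficient $1-\tfrac{3C}{7}$ is positive, so this rearranges to $\varrho(\B) > \dfrac{6-3C}{1-3C/7} = \dfrac{7(6-3C)}{7-3C} = \dfrac{42-21C}{7-3C}$. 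A short computation shows $\dfrac{42-21C}{7-3C}\ge 7$ for all $C\in[\tfrac32,\tfrac73)$ (at $C=\tfrac32$ it equals $\tfrac{21}{5}\cdot\ldots$ — wait, one checks it is increasing and $\to\infty$ as $C\to\tfrac73$, and at $C=\tfrac32$ equals $\tfrac{42-31.5}{2.5}=\tfrac{10.5}{2.5}=4.2<7$; so the clean conclusion $\varrho(\B)>7$ only holds for $C$ past some point). The robust statement to extract is therefore just $\varrho(\B)\ge 7$ combined with $\Delta_\varrho(\B)<\tfrac{\varrho(\B)-(6-3C)}{C}$, which for $\varrho(\B)\le 7$ forces $\Delta_\varrho(\B)$ to be quite small relative to $\varrho(\B)$.

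Next I would use this to control the structure. Since $\varrho\le 1$ pointwise, $\varrho(\B)>6$ forces $|\B|\ge 7$, so $7\le|\B|\le 10$. The $\varrho$-degree bound says every vertex lies (with full density) in relatively few edges; more precisely, the condition $\varrho(\B)-C\Delta_\varrho(\B)>6-3C$ rewritten as $C(\varrho(\B)-\Delta_\varrho(\B)) > C\varrho(\B) - (6-3C) \ge 6 - (6-3C) = 3C$ (using $\varrho(\B)\ge 6$ only — I should instead use $\varrho(\B)>6$ carefully) gives $\varrho(\B)-\Delta_\varrho(\B) > 3$, i.e. the $\varrho$-mass away from any single vertex exceeds $3$. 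Now I would enumerate: the intersecting $3$-uniform linear-or-not families on $7$ vertices with $\tau=3$ and $7\le|\B|\le 10$ form a short finite list (this is essentially the content behind Proposition~\ref{folk}'s proof — the maximal ones are the Fano plane with $7$ edges and a handful of families built by adding edges through a triangle or pair). For each non-Fano candidate $\B$ in this list I would exhibit a vertex $x$ of large ordinary degree (at least $3$, typically), note that putting any density on $\B$ we get $\Delta_\varrho(\B)\ge \varrho(x)$, and show that the two inequalities $\varrho(\B)>6$ and $\varrho(\B)-C\varrho(x)>6-3C$ are incompatible because in a non-Fano family the ``heaviest forced vertex'' carries too large a fraction of the total mass. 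The Fano plane survives because it is $3$-regular with each vertex in exactly $3$ of its $7$ edges, so taking $\varrho\equiv 1$ gives $\varrho(\B)=7>6$ and $\Delta_\varrho(\B)=3$, hence $7-3C>6-3C$.

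The main obstacle, and where the real work lies, is the finite case analysis: I need a clean description of \emph{all} intersecting $3$-uniform $7$-vertex families with $\tau=3$ and at least $7$ edges (not just the extremal ones), and for each I must produce the right ``witness vertex'' whose $\varrho$-degree is bounded below by a large enough multiple of $\varrho(\B)$. The slick way to do this is a weighting/averaging argument that avoids full enumeration: show that in any such family that is not the Fano plane, the hypergraph of pairs-not-covered-once, or equivalently the failure of regularity, forces $\sum_x \varrho(x)^{\text{(suitably)}}$ large enough that $\max_x\varrho(x) > \tfrac{\varrho(\B)-(6-3C)}{C}$. Concretely I expect the argument to hinge on: (i) if some pair $\{u,v\}$ lies in $\ge 2$ edges of $\B$, then since those two edges plus the transversal condition restrict things heavily, one of $u,v$ ends up with degree $\ge 3$ and density-mass forcing $\Delta_\varrho\ge \tfrac37\varrho(\B)+\varepsilon$; (ii) if $\B$ is linear (every pair in $\le 1$ edge) with $7$ vertices and $\ge 7$ edges and $\tau=3$, then $\B$ must be the Fano plane (a linear triple system on $7$ points with $7$ blocks is exactly $\mathrm{PG}(2,2)$). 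Part (ii) is classical; part (i) is the delicate bookkeeping. I would structure the write-up as: the averaging bound $\Delta_\varrho\ge\tfrac37\varrho(\B)$ (always); the linear case via the classical characterization; the non-linear case by showing a repeated pair yields a vertex of $\varrho$-degree exceeding the allowed threshold, contradicting the diversity hypothesis.
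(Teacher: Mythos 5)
Your reductions at the start are fine (despite some garbled algebra, the hypotheses do yield $|\B|\ge 7$ and a bound of the form $\Delta_\varrho(\B)<\tfrac{\varrho(\B)-6}{C}+3\le\tfrac23\varrho(\B)-1$, which is exactly inequality~(\ref{ineq-chain}) in the paper), and your linear-case endgame is correct and matches the paper's final step: a linear triple system on $7$ vertices with at least $7$ blocks covers every pair exactly once and is the Fano plane. But the entire content of the lemma sits in the part you leave as ``delicate bookkeeping,'' and as sketched it does not go through. Your step (i) — that a repeated pair forces a single vertex whose $\varrho$-degree exceeds the allowed threshold — is not established and is too weak as a tool: a vertex of large \emph{ordinary} degree tells you nothing about its $\varrho$-degree, and single-vertex witnesses do not suffice in general (for instance, in the $10$-edge non-Fano families with $\tau=3$ one must average the $\varrho$-degree over a small set of vertices covering every edge at least twice to reach a contradiction; $\Delta_\varrho\ge\varrho(x)$ for one cleverly chosen $x$ is not enough). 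Your alternative route, a full enumeration of all intersecting $3$-uniform $7$-vertex families with $\tau=3$ and $7\le|\B|\le 10$, is also not carried out, and Proposition~\ref{folk} does not hand you that list. Finally, your fallback assertion ``$\varrho(\B)\ge 7$'' has no justification — only $\varrho(\B)>6$ is available, and as you yourself computed, the crude averaging $\Delta_\varrho\ge\tfrac37\varrho(\B)$ does not force $\varrho(\B)>7$ for $C$ near $\tfrac32$.

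The paper closes this gap with a different and more structural argument that you are missing. First it proves (Claim~\ref{no-zero-claim}) that \emph{every} pair of vertices lies in some edge: if the pair $\{1,2\}$ were uncovered, then $\B(1)$ and $\B(2)$ are cross-intersecting, each must contain two disjoint petals (else a transversal of size $2$ exists), and this forces the seventh vertex to lie in at most two possible edges and no edge to contain two of $\{1,2,7\}$; summing $\varrho$-degrees and bounding the degrees of the remaining four vertices by $\Delta_\varrho(\B)<\tfrac23\varrho(\B)-1$ then gives $\varrho(\B)<6$, a contradiction. Second, it rules out repeated pairs \emph{purely combinatorially}, by reduction to the first step: if $123,124\in\B$, then $\tau(\B)=3$ forces an edge such as $345$, and then no edge can contain the pair $\{6,7\}$, contradicting the coverage claim. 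Note the logical order is the opposite of yours: the density hypothesis is spent on the uncovered-pair configuration (where the structure is rigid enough for the averaging to close), while the non-linearity is handled without any weights. If you want to salvage your outline, you should replace your step (i) by exactly this two-stage argument, or else genuinely carry out the finite case analysis with multi-vertex averaging; as written, the proposal is a plan whose key lemma is unproven.
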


\begin{proof}
Let $V(\B) = [7]$. Rearranging terms gives

\begin{equation}\label{ineq-chain}
  \Delta_\varrho(\B) < \frac{\varrho(\B)-6}{C}+ 3
   \le \frac{2}{3} \varrho(\B) - 1 
\end{equation}
as $C \geq \frac{3}{2}$ and 
$\varrho(\B) > 6$.

We first show that each pair of vertices in is contained in an edge.

\begin{claim}\label{no-zero-claim}
    $|\B(xy)| > 0$ for all $x,y \in V(\B)$.
\end{claim}

\begin{proof}
      Suppose there is a pair of vertices $1,2$ such that $|\B(12)|=0$. Observe that $\B(1)$ and $\B(2)$ are cross-intersecting, i.e., each member of $\B(1)$ must intersect each member of $\B(2)$ and vice-versa.  
      We now claim that that $\B(1)$ contains two disjoint edges $34$ and $56$. Otherwise, $\B(1)$ is intersecting and therefore is either a star or a triangle. In either case, we can select two vertices of $\B(1)$ that form a transversal of $\B(\overline{1})$ and thus a transversal of $\B$, a contradiction. 
      
\begin{figure}[H]
\begin{center}
{\includegraphics[scale=1.3]{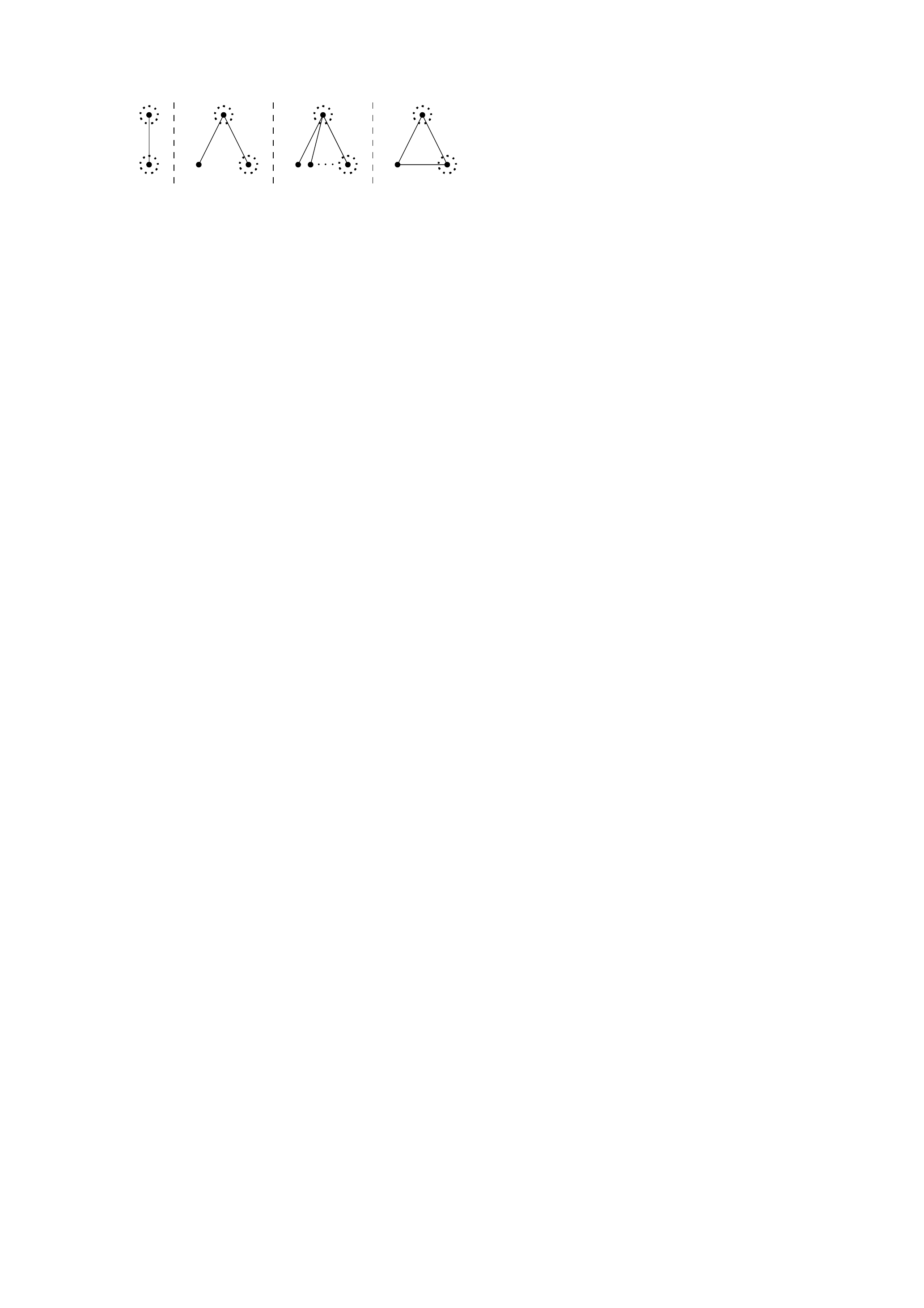}}
\end{center}
\caption{Configurations when $\B(1)$ intersecting with a transversal of $\B$ highlighted.}
\end{figure}

      The same holds for $\B(2)$ and, as $\B(1)$ and $\B(2)$ are cross-intersecting, we have, without loss of generality, $\B(2)$ contains edges $35$ and $46$. 
    Now consider vertex $7$. Observe that no edge of $\B$ contains the pair 17 or 27 as such an edge fails to intersect both of 235 and 236 or both of 134 or 156.
    Therefore, the only possible edges in $\B(7)$ are $36$ and $45$.

    By (\ref{ineq-chain}) we have 
    $\Delta_\varrho(\B) <  \frac{2}{3} \varrho(\B) - 1$. 
    Therefore, the sum of $\varrho$-degrees in $\B$ is
    \begin{align*}
    3 \varrho(\B) & \leq \varrho(1) +  \varrho(2) + \varrho(7) + 4 \cdot \Delta_\varrho(\B) \\
    & <   \varrho(1) +  \varrho(2) + \varrho(7) +  \frac{8}{3} \varrho(\B) - 4 \\
    & \leq 2 + \frac{8}{3} \varrho(\B)
    \end{align*}
     which implies 
     $\varrho(\B) < 6 <\varrho(\B)$, a contradiction.
\end{proof}

Now suppose that $\B$ has a pair of vertices $1,2$ such that $|\B(12)|\geq 2$. Let $123$ and $124$ be two edges. Then, as $\{1,2\}$ is not a transversal, there is an edge $345$. Now observe that there is no edge of $\B$ that contains both $6$ and $7$ as it cannot intersect edges $123,124,345$. This contradicts Claim~\ref{no-zero-claim}. 
Therefore, each pair of vertices in $\B$ is contained in exactly one edge. This implies that $\B$ is the Fano plane.
\end{proof}

We are now ready to prove our main result.

\mainthm*

\begin{proof}[Proof of Theorem~\ref{main-gen}]
Let $\F \subseteq \binom{[n]}{r}$ be an intersecting family with maximum $C$-weighted diversity $d_C(\F)$. Then comparing to $d_C(\A_{\mathbb{F}^+})$ and $d_C(\A_{\mathbb{F}})$ gives $d_C(\F)\geq (7-3C) \binom{n-7}{r-3}$ for all $\frac{3}{2}\leq C < \frac{7}{3}$.

Let $\mathfrak{B}\F$ be the flower base of threshold $r$.
If $\mathfrak{B}\F$ contains a singleton set $\{x\}$, then $d_C(\F) \leq |\F| -|\F(x)| \leq 0$, so we may assume the members of $\mathfrak{B}\F$ are of size at least two.
We first restrict our attention to flower base members of size $2$ or $3$. Define $\B:=\mathfrak{B}_2\F \cup \mathfrak{B}_3\F$. The lower bound and inequality~(\ref{combo-bound}) gives:

\begin{equation*}
(7-3C) \binom{n-7}{r-3} \leq d_C(\F) \leq |\F| \leq \sum_{B \in \B} \binom{n-|B|}{r-|B|} + r_0 \binom{n-4}{r-4}.    
\end{equation*}

This implies that $\B$ is non-empty (for $n$ large enough).
We now distinguish cases based on the size of a minimum transversal of $\B$.

\medskip

{\bf Case 1:} $\tau(\B)=1.$

\medskip

Then every member of $\B$ contains a fixed vertex $x$. Therefore, by (\ref{combo-bound}), $d_C(\F) \leq |\F| - |\F(x)| \leq r_0\binom{n-4}{r-4}$, a contradiction for $n$ large enough.

\medskip

{\bf Case 2:} $\tau(\B)=2.$

\medskip

If $\mathfrak{B}_2\F$ forms a triangle on vertices $\{x,y,z\}$, then as each member of $\mathfrak{B}_2\F$ is a transversal of $\F$, we have that every edge of $\F$ contains at least two vertices from $\{x,y,z\}$, i.e., 
$\F$ has a vertex in $\frac{2}{3}$ of the edges. Thus, as $C\geq \frac{3}{2}$, Lemma~\ref{ker-lemma} implies $d_C(\F) \leq 0$, a contradiction.

Now suppose that $\mathfrak{B}_2\F$ does not form a triangle. Then  $\mathfrak{B}_2\F$ forms a star with center $x$, or $\mathfrak{B}_2\F$ is empty. If $\mathfrak{B}_2\F$ is nonempty,  pick any $xy \in \mathfrak{B}_2\F$ and note that $\{x,y\}$ is a transversal of $\B$. If $\mathfrak{B}_2\F$ is empty, let $\{x,y\}$ be any transversal of $\B$. In both cases $\{x,y\}$ is also transversal of $\F$.
Now, see that $|\F(x)| \leq \Delta(\F)$. Moreover, as $y$ is in each edge not containing $x$, we have $|\F(\overline{x})| \leq |\F(y)| \leq \Delta(\F)$.
Therefore we have the following two bounds:
\[
(7-3C) \binom{n-7}{r-3} \leq d_C(\F) \le 
|\F(x)| + |\F(\overline{x})| - C  |\F(x)| = (1-C)|\F(x)| + |\F(\overline{x})|
\]
and
\[
(7-3C) \binom{n-7}{r-3} \leq d_C(\F) \leq  |\F(x)| + |\F(\overline{x})| - C  |\F(\overline{x})| = |\F(x)| + (1-C)|\F(\overline{x})|.
\]
Multiplying the second inequality by $(C-1)$ and adding the first inequality gives
\[
C(7-3C) \binom{n-7}{r-3} \leq ((C-1)(1-C)-1) |\F(\overline{x})| = (2C-C^2) |\F(\overline{x})|.
\]
Let $\ell$ be the number of members of $\B$ that do not contain $x$, and observe each is of size at least $3$ as all members of $\mathfrak{B}_2\F$ contain $x$. Therefore, 
\[
(7-3C) \binom{n-7}{r-3} \leq (2-C) |\F(\overline{x})| \leq  (2-C) \ell \binom{n-3}{r-3}
\]
which, for $n$ large enough, implies $\ell=|\B(\overline x)|>4$.

The set $\{x,y\}$ is a minimum transversal of $\B$ which implies there is a member of $\B$ containing $x$ and not $y$. If there is a member of size two distinct from $xy$, i.e., $xz \in \B$, then all members of $\B(\overline{x})$ must contain $z$ as $\B$ is intersecting. As $|\B(\overline x)|>4$, in order to maintain the intersecting property, all members of $\B(\overline{y})$ must also contain $z$. Therefore, all members of $\B$ contain at least two of $\{x,y,z\}$ which implies that there is a vertex in $\frac{2}{3} \geq \frac{1}{C}$ of the edges of $\F$. Again Lemma~\ref{ker-lemma} implies that $d_C(\F) \leq 0$, a contradiction.

 So now the only potential member of $\mathfrak{B}_2\F$ is $xy$. 
 Whether $xy$ is a member of $\mathfrak{B}_2\F$ or not, we can repeat the estimates on $d_C(\F)$ above with $y$ in place of $x$ to obtain $|\B(\overline{y})|> 4$ as well. Let $xzw\in \B(\overline{y})$. By pigeonhole, and without loss of generality, at least three members of $\B(\overline{x})$ contain $z$. No member of $\B(\overline{y})$ can intersect each of these edges without containing $z$. It readily follows that any member of $\B$ must contain two of $\{x,y,z\}$, so again $d_C(\F) \leq 0$, a contradiction.

\medskip

{\bf Case 3:} $\tau(\B) = 3.$

\medskip

Each member of the flower base is a transversal of $\F$, so $\B$ contains no member of size $2$. Therefore, $\B$ is an intersecting $3$-uniform family with minimum transversal size $3$. Proposition~\ref{folk} implies $|V(\B)|\leq 7$. 

Suppose $|V(\B)|\leq 6$. Let $\F_3$ be the edges of $\F$ that intersect $V(\B)$ in exactly three vertices and $\F_{\geq 4}$ be edges of $\F$ that intersect $V(\B)$ in at least four vertices. Suppose there is an edge $F$ of $\F$ that intersects $V(\B)$ in at most two vertices. As each $B \in \B$ is a transversal of $\F$, the vertices of $F \cap V(\B)$ form a transversal of size at most $2$ of $\B$, a contradiction. Therefore, $\F = \F_3 \sqcup \F_{\geq 4}$.

For each subset $T \subset V(\B)$ of size $3$ there is a corresponding complement $T' =  V(\B) \setminus T$. Observe that the link graphs $\F(T)$ and $\F(T')$ are cross-intersecting and do not contain elements of $T\cup T'$. Therefore, by (\ref{cross-ineq}) we have
\[
|\F(T)| + |\F(T')| \leq \binom{n-6}{r-3}.
\]
Summing over all $10 = \frac{1}{2} \binom{6}{3}$ pairs $\{T,T'\}$ we have
\[
|\F_3| \leq 10 \binom{n-6}{r-3}.
\]
Now, for $C\geq \frac{3}{2}$ we can estimate the diversity as:
\begin{align}\label{632-ub}
d_{C}(\F) \leq |\F_3| + |\F_{\geq 4}| - C\left(\frac{3}{6} |\F_3| + \frac{4}{6}|\F_{\geq 4}|\right) \le \left(1-C\frac{1}{2}\right) |\F_3|   \leq (10-5C) \binom{n-6}{r-3}.
  \end{align}

For $C> \frac{3}{2}$ we have $7-3C > 10-5C$. Therefore,
\[
d_C(\F) \geq  (7-3C) \binom{n-7}{r-3} > (10-5C) \binom{n-6}{r-3},
\]
for $n$ large enough, contradicting (\ref{632-ub}).
When $C=\frac{3}{2}$, we have $7-3C = 10-5C = \frac{5}{2}$. Therefore,
\[
d_{3/2}(\F) \geq d_{3/2}(\A_{\mathbb{F}^+}) = \frac{5}{2} \binom{n-7}{r-3} + 4 \binom{n-7}{r-4}  >\frac{5}{2} \binom{n-6}{r-3}
\]
which again contradicts (\ref{632-ub}).

So now we may assume $|V(\B)|=7$.
Let $\varrho(B)$ be the proportion of total possible edges of $\F$ intersecting $V(\B)$ in exactly the $3$-set $B$, i.e.,
\[
\varrho(B) := \frac{|\F(B)|}{\binom{n-7}{r-3}}\in (0,1].
\]
The average $\varrho$-degree of a vertex in $V(\B)$ is $\frac{3}{7} \varrho(\B)$. Therefore,
by (\ref{combo-bound}),
\[
(7-3C)\binom{n-7}{r-3} \leq d_C(\F) \leq \varrho(\B) \binom{n-7}{r-3} - C \frac{3}{7} \varrho(\B)\binom{n-7}{r-3} + r_0\binom{n-4}{r-4}
\]
which implies $\varrho(\B) \geq 7 - o(1) > 6$ for $n$ large enough.

Now, edges of $\F$ all fit into at least one of three types: those that intersect $V(\B)$ in exactly a member of $\B$, those that intersect $V(\B)$ in four or more vertices, and those that contain a member of $\mathfrak{B}\F$ of size at least $4$. As there are no more than $2^7$ subsets of $V(\B)$, $|\F|$ can be estimated as:
\[
|\F| \leq \sum_{B \in \B} \varrho(B) \binom{n-7}{r-3} + 2^7 \binom{n-7}{r-4} + r_0\binom{n-4}{r-4} \le \varrho(\B)\binom{n-7}{r-3}  + 3r_0 \binom{n-4}{r-4}.
\]
We can estimate the maximum degree in $\F$ as:
\[
 \Delta(\F) \geq   \max_{x \in V(\B)} \left \{  \sum_{\substack{B \in \B \\ x \in B}} \varrho(B) \binom{n-7}{r-3} \right\} =   \Delta_\varrho(\B)\binom{n-7}{r-3}.
\]
Therefore,
 \begin{align*}
d_C(\F) \leq  \left( \varrho(\B) - C \cdot \Delta_\varrho(\B) \right) \binom{n-7}{r-3} + 3r_0 \binom{n-4}{r-4}.
 \end{align*}
 
Now, as $d_C(\F) \geq (7-3C)\binom{n-7}{r-3}$, we have $\varrho(\B) - C \cdot \Delta_\varrho(\B) \geq 7-3C - o(1) > 6-3C$ for $n$ large enough. By Lemma~\ref{baby-lemma} we have that $\B$ is the Fano plane.

Given that $\B$ is the Fano plane, one can easily see that $\B$ must be the entire flower base of $\F$. Any member $B \in \mathfrak{B}\F$ of the flower base intersects each member of $\B$, so $B$ is a transversal of $\B$ and intersects $V(\B)$ in at least three vertices. The transversals of size $3$ or $4$ of the Fano plane either contain or are members of the Fano plane, and any set of $5$ or more vertices of $V(\B)$ contains a member of the Fano plane. Therefore, by the Sperner condition of $\mathfrak{B}$, we have $B\in \B$.

Let $\F_3$ be the edges of $\F$ that intersect $V(\B)$ in exactly a member of $\B$. Let $\F_4$ be the edges $\F$ that intersect $V(\B)$ in exactly $4$ vertices and $\F_{\geq 5}$ be the edges $\F$ that intersect $V(\B)$ in at least $5$ vertices.
 Since each edge of $\F$ must contain a member of $\B$ we have $\F = \F_3 \sqcup \F_4 \sqcup \F_{\geq 5}$.
The sum of degrees of vertices in $V(\B)$ is 
$\sum_{v\in V(\B)} |F(v)| \geq  3|\F_3|+4|\F_4|+5|\F_{\geq 5}|$. The maximum degree of a vertex in $\F$ is at least the average degree of a vertex in $V(\B)$, so
\[
\Delta(\F) \geq \frac{3}{7}|\F_3| +\frac{4}{7}|\F_4| +\frac{5}{7}|\F_{\geq 5}|.
\]

We can bound the diversity $d_C(\F)$ as follows.
\begin{align*}
     d_C(\F) & = |\F| - C\cdot \Delta (\F)\\
    & \leq  |\F_3|+|\F_4|+|\F_{\geq 5}| - C\left(\frac{3}{7}|\F_3| +\frac{4}{7}|\F_4| +\frac{5}{7}|\F_{\geq 5}|\right)\\
    &= \left(1-C \frac{3}{7}\right)|\F_3| +\left(1-C \frac{4}{7}\right)|\F_4| +\left(1-C \frac{5}{7}\right)|\F_{\geq 5}|.   
\end{align*}
As $\B$ is the Fano plane it has $7$ members, so we have $|\F_3| \leq  7\binom{n-7}{r-3}$.
For $C > \frac{7}{4}$, the coefficients of $|\F_4|$ and $|\F_{\geq 5}|$ are negative. So, in order to satisfy
\[
(7-3C) \binom{n-7}{r-3} = d_C(\A_{\mathbb{F}}) \leq d_C(\F), 
\]
we must have $|\F_3| =  7\binom{n-7}{r-3}$ and $\F_4$ and $\F_{\geq 5}$ both empty, i.e., $\F$ is isomorphic to the family $\A_{\mathbb{F}}$.

For $\frac{3}{2} \leq C \leq \frac{7}{4}$, the quantity $\left(1-C \frac{5}{7}\right)$ is negative. No edge of $\F$ intersects $V(\B)$ in exactly $4$ vertices that are disjoint from a $B \in \B$. Therefore, $|\F_4| \leq 28 \binom{n-7}{r-4}$.
As before, in order to satisfy
\[
(7-3C) \binom{n-7}{r-3} +  (28-16C) \binom{n-7}{r-4} = d_C(\A_{\mathbb{F}^+}) \leq d_C(\F)
\]
we must have $|\F_3| = 7\binom{n-7}{r-3}$ and $\F_{\geq 5}$ is empty.
When $C< \frac{7}{4}$ we must have $|\F_4| =  28\binom{n-7}{r-4}$, i.e., $\F$ is isomorphic to the family $\A_{\mathbb{F}^+}$. 
When $C=\frac{7}{4}$, this implies that $\F$ contains $\A_\mathbb{F}$ and is contained in $\A_{\mathbb{F}^+}$. Since $|\F_3| = 7 \binom{n-7}{r-3}$, each vertex of $V(\B)$ is contained in the same number of edges of $|\F_3|$. This implies that each vertex of $V(\B)$ must also be contained in the same number of edges of $\F_4$, i.e., $d_C(\F \setminus \A_\mathbb{F})=0$.
\end{proof}

We now prove Theorem~\ref{main-gen-C}.

\thmthree*

\begin{proof}
We begin with the upper bound. First assume $\F$ has a transversal of size at most $q$. Then some vertex $x$ in that transversal sees at least $\frac{1}{q}$ proportion of the edges of $\F$. 
Observe that $C \frac{1}{q} \geq \frac{q^2+q+1}{q^2+q}>1$, so
\begin{align*}
    d_C(\F) \leq |\F| -C  |\F(x)| 
     &\leq \left(1-C\frac{1}{q}\right)|\F|
    < 0
\end{align*}
which establishes the upper bound.

Thus, we can suppose $\F$ satisfies $\tau(\F)>q$, so the members of its flower base (with threshold $r$) are of size at least $q+1$. Then by (\ref{combo-bound}), we get the following bound on the diversity of $\F$.
\begin{align*}
    d_C(\F) \leq |F|
     = \sum_{B\in \mathfrak{B}\F}\binom{n-|B|}{r-|B|}
    \leq r_0 \binom{n-q-1}{r-q-1}
    =O(n^{r-q-1}).
\end{align*}

For the lower bound, let $n$ be sufficiently large and let $\mathcal{P}$ be a finite projective plane of order $p$ (i.e., a
$(p^2+p+1,p+1,1)$-design) on vertex set $[p^2+p+1]$. These are known to exist for all prime powers $p$. We define the family
\[
\F := \{ F\in\binom{[n]}{r} : F\cap [p^2+p+1]\in \mathcal{P}\}.
\]
Since $\mathcal{P}$ is intersecting, $\F$ is intersecting.
It is straightforward to see that $\F$ has $C$-weighted diversity given by
\begin{align*}
    d_C(\F) &= |\F|-C\cdot\Delta(F)\\
    &= (p^2+p+1)\binom{n-p^2-p-1}{r-p-1}-C\cdot \Delta(\mathcal{P})\binom{n-p^2-p-1}{r-p-1}\\
    &= (p^2+p+1-C\cdot \Delta(\mathcal{P}))\binom{n-p^2-p-1}{r-p-1}\\
    &= \left(1 - C\cdot \frac{p+1}{p^2+p+1}\right)(p^2+p+1)\binom{n-p^2-p-1}{r-p-1}\\
    & = \Omega(n^{r-p-1}). \qedhere
\end{align*}
\end{proof}

\section{Stability}\label{stability-section}

In this section we prove two stability theorems for diversity. The first shows that a ``two out of three'' family $\A$ (i.e.\ $\A_\mathbb{K} \subseteq \A \subseteq \A_{\mathbb{K}^+}$) is stable
and the second is a short reproof (for a much worse threshold on $n$) of a theorem due to Kupavskii and Zakharov~\cite{KuZa} (see also Frankl~\cite{Fr-ekr-deg}).

\begin{thm}
Let $r\geq 3$ and fix $0 \leq t \leq \varepsilon \binom{n-3}{r-2}$ for $0<\varepsilon<1$.
If $\F \subseteq \binom{[n]}{r}$ is an intersecting family with diversity
\[
d(\F)= \binom{n-3}{r-2}-t,
\]
then we can add at most $3t$ edges to $\F$ to
obtain a ``two out of three'' family, for $n$ large enough.
\end{thm}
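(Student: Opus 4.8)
The plan is to run the flower-base machinery exactly as in the proof of Theorem~\ref{32-ker-prop}, but tracking losses quantitatively rather than deriving contradictions. Let $\F$ be intersecting with $d(\F)=\binom{n-3}{r-2}-t$. Since $d(\F)>0$ for $n$ large and $t<\binom{n-3}{r-2}$, the flower base $\mathfrak{B}\F$ (threshold $r$) cannot contain a singleton, and $\mathfrak{B}_2\F$ must be nonempty: if $\mathfrak{B}\F$ had no member of size $2$, then $d(\F)\le|\F|\le r_0\binom{n-3}{r-3}$, which is far below $\binom{n-3}{r-2}-t$ for $n$ large (using $t\le\varepsilon\binom{n-3}{r-2}$). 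Likewise, if all members of $\mathfrak{B}_2\F$ shared a common vertex $x$, then $d(\F)\le|\F|-|\F(x)|\le r_0\binom{n-3}{r-3}$, again impossible. So $\mathfrak{B}_2\F$ has no common vertex, and since $\mathfrak{B}\F$ is intersecting (Lemma~\ref{inherit}), $\mathfrak{B}_2\F$ is a triangle on vertices $\{1,2,3\}$, say.

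Next I would show $\F$ is essentially the ``two out of three'' family on $\{1,2,3\}$ by a counting argument with a bookkeeping budget of $t$. Since each member of $\mathfrak{B}_2\F$ is a transversal of $\F$ (Lemma~\ref{inherit}(2)), every edge of $\F$ meets $\{1,2,3\}$ in at least two vertices; write $\F=\F_2\sqcup\F_3$ as in the proof of Theorem~\ref{32-ker-prop}, where $\F_2$ is the edges meeting $\{1,2,3\}$ in exactly two vertices and $\F_3$ those containing all of $\{1,2,3\}$. Then $d(\F)=|\F_2|+|\F_3|-\Delta(\F)$. By pigeonhole some vertex $x\in\{1,2,3\}$, say $1$, lies in at least $\frac{2}{3}|\F_2|$ edges of $\F_2$, so $\Delta(\F)\ge\frac{2}{3}|\F_2|+|\F_3|$, giving $d(\F)\le\frac{1}{3}|\F_2|\le\binom{n-3}{r-2}$. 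Combining with $d(\F)=\binom{n-3}{r-2}-t$ yields $|\F_2|\ge 3\binom{n-3}{r-2}-3t$; that is, $\A_\mathbb{K}$ misses at most $3t$ edges of $\F_2$. Adding those (at most) $3t$ missing edges of the form $\{i,j\}\cup S$ with $\{i,j\}\subset[3]$, $S\subset[4,n]$ to $\F$ produces a family $\F'$ with $\A_\mathbb{K}\subseteq\F'$ and every edge meeting $[3]$ in at least two vertices, i.e.\ $\A_\mathbb{K}\subseteq\F'\subseteq\A_{\mathbb{K}^+}$, which is exactly a ``two out of three'' family.

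The one thing needing care is confirming these added edges preserve the intersecting property: any added edge contains a pair from $[3]$, and since every edge of the target $\A_{\mathbb{K}^+}$ also contains a pair from $[3]$, any two such edges share a vertex of $[3]$ — so $\A_{\mathbb{K}^+}$ is intersecting and $\F'\subseteq\A_{\mathbb{K}^+}$ is automatically intersecting. The step I expect to be the main obstacle — though it is more bookkeeping than difficulty — is pinning down that the deficiency in $|\F_2|$ is at most $3t$ and not some larger multiple: one must be careful that $\frac{1}{3}|\F_2|\le\binom{n-3}{r-2}$ is the right inequality (it is, since $|\F_2|\le 3\binom{n-3}{r-2}$ as there are exactly three pairs in $[3]$ and $\binom{n-3}{r-2}$ completions of each), and that no other contribution to $d(\F)$ can compensate, which is exactly where $|\F_3|$ dropping out and the maximum-degree lower bound $\Delta(\F)\ge\frac23|\F_2|+|\F_3|$ are used. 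The hypothesis $t\le\varepsilon\binom{n-3}{r-2}$ is only needed to guarantee $d(\F)$ is large enough to rule out the degenerate flower-base cases at the start.
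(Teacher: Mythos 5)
Your proposal is correct and follows essentially the same route as the paper: the flower base of threshold $r$, elimination of the singleton, no-size-two, and common-vertex cases, the triangle $\mathfrak{B}_2\F$ forcing every edge to meet the three special vertices in at least two points (so $\F\subseteq\A_{\mathbb{K}^+}$), and then a degree count showing at most $3t$ edges of $\A_\mathbb{K}$ are missing. The only cosmetic difference is the final count: you use maximum degree at least average degree over the three triangle vertices, giving $\Delta(\F)\ge\tfrac{2}{3}|\F_2|+|\F_3|$ and hence $3d(\F)\le|\F_2|$, whereas the paper sums the three inclusion--exclusion inequalities $d(\F)\le|\F(yz)|-|\F(xyz)|$; these yield the identical bound.
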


\begin{proof}
Let  $\mathfrak{B}\F$ be the flower base with threshold $r$.
If $\mathfrak{B}\F$ contains a singleton set $\{x\}$, then by Lemma~\ref{inherit}, every edge of $\F$ contains vertex $x$, i.e., $\F$ has diversity $0$. 

If $\mathfrak{B}\F$ has no member of size $2$, then (\ref{combo-bound}) gives
\[
d(\F) <|\F| \leq r_0 \binom{n-3}{r-3} <  (1-\varepsilon) \binom{n-3}{r-2} \leq \binom{n-3}{r-2}-t,
\]
for $n$ large enough, a contradiction.
Therefore, $\mathfrak{B}\F$ contains a member of size $2$, i.e., $\mathfrak{B}_2\F \neq \emptyset$. Suppose that the members of size $2$ have a common vertex $x$. Then by (\ref{combo-bound}),
\begin{align*}
d(\F) & \leq |\F| -   |\F(x)| 
\leq |\F(x)| +  r_0 \binom{n-3}{r-3} -  |\F(x)|  < (1-\varepsilon) \binom{n-3}{r-2} \leq \binom{n-3}{r-2}-t,
\end{align*}
for $n$ large enough, a contradiction. 

As $\mathfrak{B}\F$ is intersecting, $\mathfrak{B}_2\F$ is then a triangle and spans vertices $\{x,y,z\}$. Moreover, as each member of $\mathfrak{B}_2\F$ is a transversal of $\F$, each edge of $\F$ must intersect $\{x,y,z\}$ in at least two vertices. Therefore, $\F$ is a subfamily of $\A_{\mathbb{K}^+}$
and we need to simply add edges to $\F$ to get a ``two out of three'' family. It remains to compute how many edges we need to add.

Consider the ``two out of three family'' $\A$ that consists of $\A_{\mathbb{K}}$ and all edges of $\F$ containing $\{x,y,z\}$. The family $\A$ has $3\binom{n-3}{r-2} + |\F(xyz)|$ edges and contains $\F$.
The number of edges of $\F$ containing $x$ is $|\F(xy)|+|\F(xz)|-|\F(xyz)| \leq \Delta(\F)$ and
$|\F| = |\F(xy)|+|\F(xz)|+|\F(yz)|- 2 |\F(xyz)|$. Therefore,
\begin{align*}
 \binom{n-3}{r-2}-t = |\F| - \Delta(\F) \leq  |\F(yz)|-|\F(xyz)|.
\end{align*}

The same argument gives $\binom{n-3}{r-2}-t \leq |\F(xy)|-|\F(xyz)|$ and $\binom{n-3}{r-2}-t \leq |\F(xz)| -|\F(xyz)|$. Thus,
\[
3 \binom{n-3}{r-2}-3t \leq |\F(xy)|+|\F(xz)|+|\F(yz)| - 3|\F(xyz)|  = |\F| - |\F(xyz)|.
\]
Therefore, 
$|\F| \geq 3\binom{n-3}{r-2} + |\F(xyz)| -  3t$, i.e., we need to add at most $3t$ edges to $\F$ to obtain the ``two out of three'' family $\A$.
\end{proof}

\begin{thm}[Kupavskii, Zakharov \cite{KuZa}]
Let $\F \subseteq \binom{[n]}{r}$ be an intersecting family.
If 
\[
|\F| > \binom{n-1}{r-1}  - \binom{n-u-1}{r-1} + \binom{n-u-1}{r-u}
\]
for a real $3 \leq u \leq r$, then
\[
d(\F) < \binom{n-u-1}{r-u},
\]
for $n$ large enough.
\end{thm}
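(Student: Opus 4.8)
The plan is to follow the template of the previous two theorems in this section, using the flower base with threshold $r$. First I would set $\mathfrak{B}\F$ to be the flower base and argue that it contains a member of size $2$: if $\mathfrak{B}\F$ had a singleton then $\F$ is a star and $d(\F)=0$ (and $|\F| = \binom{n-1}{r-1}$, which does not exceed the hypothesized bound once $u \geq 3$, a contradiction); if $\mathfrak{B}_2\F = \emptyset$ then (\ref{combo-bound}) forces $|\F| \leq r_0\binom{n-3}{r-3}$, contradicting the size hypothesis for $n$ large. Next, as in the proof of Theorem~\ref{32-ker-prop}, if the members of $\mathfrak{B}_2\F$ share a common vertex $x$, then $|\F| - |\F(x)| \leq r_0\binom{n-3}{r-3}$, so $d(\F) \leq r_0\binom{n-3}{r-3} < \binom{n-u-1}{r-u}$ for $n$ large (using $u \leq r$, so $r - u \geq 0$, and this term is at least linear in $n$ when $u < r$; the boundary case $u = r$ needs a slightly separate check, but there $\binom{n-u-1}{r-u} = 1$ and the size hypothesis $|\F| > \binom{n-1}{r-1} - \binom{n-r-1}{r-1} + 1$ is exactly the Hilton-Milner bound, which is impossible). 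So $\mathfrak{B}_2\F$ has no common vertex, hence forms a triangle on vertices $\{x,y,z\}$, and every edge of $\F$ contains at least two of $\{x,y,z\}$; that is, $\F \subseteq \A_{\mathbb{K}^+}$.

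With $\F \subseteq \A_{\mathbb{K}^+}$ established, the remaining work is a direct computation relating $|\F|$, $\Delta(\F)$, and $d(\F)$ using the partition of $\F$ by how edges meet $\{x,y,z\}$. Write $a = |\F(yz)| - |\F(xyz)|$, $b = |\F(xz)| - |\F(xyz)|$, $c = |\F(xy)| - |\F(xyz)|$ for the number of edges containing exactly the indicated pair, and $w = |\F(xyz)|$. Then $|\F| = a + b + c + w$ and the degree of $x$ is $b + c + w$ (similarly for $y$, $z$), so $\Delta(\F) = \max\{a,b,c\} + w$; without loss of generality $a = \max\{a,b,c\}$, giving $d(\F) = b + c$. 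The size hypothesis becomes a lower bound on $a + b + c + w$, while I want an upper bound on $b + c$. The key additional input is that the edges counted by $b$ together with a fixed edge through $\{y,z\}$ (or the cross-intersecting structure of the relevant links) forces $a \geq$ something large; more precisely, I would use that $\F(\overline{x})$ — the edges missing $x$ — all contain $\{y,z\}$, so $|\F(\overline{x})| = a$, and $|\F| - a = |\F(x)| \leq \binom{n-1}{r-1}$; combined with the size hypothesis this gives $a > \binom{n-u-1}{r-1} - \binom{n-u-1}{r-u}$ after rearranging. Then a symmetric estimate using the $(u-1)$-element transversal structure inside the link $\F(x)$ should pin $b + c$ below $\binom{n-u-1}{r-u}$.

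Concretely, the cleanest route is: since $\F(x)$ is an intersecting family on $[n] \setminus \{x\}$ with $\{y,z\} \subseteq$ every edge of $\F(\overline x)$, and $\F(x)$ has size $|\F| - a$, apply a Hilton-Milner-type or transversal bound inside $\F(x)$ to control $b + c = d(\F)$. The members counted by $b+c$ are exactly the edges of $\F$ through $x$ that miss at least one of $y,z$; these form (with the edges through $\{x,y,z\}$) a family whose diversity relative to $x$ is what we are bounding, and the size hypothesis on $|\F|$ translates, via $|\F(x)| = |\F| - a$ and the upper bound $a \le \binom{n-2}{r-2}$, into a lower bound on $|\F(x)|$ forcing it close to the full star $\binom{n-1}{r-1}$; then the Kupavskii–Zakharov-style argument (or just the observation that a dense near-star has small "off-center" part) gives $b+c < \binom{n-u-1}{r-u}$.

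The main obstacle is the final computational step: extracting the sharp bound $d(\F) < \binom{n-u-1}{r-u}$ from the size hypothesis with the correct constants, for a real parameter $u$ rather than an integer, and handling the extreme cases $u = 3$ and $u = r$ where the binomial coefficients degenerate. I expect the bulk of the real content to be this binomial bookkeeping combined with one genuine structural input — most likely a careful application of inequality~(\ref{cross-ineq}) to cross-intersecting links inside $\F(x)$ — rather than anything requiring new machinery beyond the flower base already developed.
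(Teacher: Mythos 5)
There is a genuine gap, and it sits exactly where the theorem's content lies. In your case analysis, the configuration where the members of $\mathfrak{B}_2\F$ share a common vertex $x$ is not a degenerate case to be dismissed --- it is the main case, containing the extremal families (all edges through $x$ meeting a fixed set $Y$, plus all edges containing $Y$). Your dismissal of it via $d(\F) \leq |\F(\overline{x})| \leq r_0\binom{n-3}{r-3} < \binom{n-u-1}{r-u}$ is false for large $n$: the left-hand bound has order $n^{r-3}$ while $\binom{n-u-1}{r-u}$ has order $n^{r-u}$ with $u \geq 3$, so the inequality fails for every $u > 3$, and at $u=3$ the factor $r_0$ already breaks it. The paper's proof does the real work precisely here: the size hypothesis forces $|\mathfrak{B}_2\F| \geq u$ (via $|\F| > u\binom{n-u-1}{r-2}$ and inequality~(\ref{combo-bound})); in the star case one may assume some flower-base member $Y$ avoids $x$ (otherwise $d(\F)=0$), intersection with all $u$ pairs of $\mathfrak{B}_2\F$ forces $|Y| \geq u$, the bound $|\F| \leq \binom{n-1}{r-1} - \binom{n-|Y|-1}{r-1} + \binom{n-|Y|-1}{r-|Y|}$ compared against the hypothesis forces $|Y| > u$ strictly, and then $d(\F) \leq |\F(\overline{x})| \leq \binom{n-|Y|-1}{r-|Y|} < \binom{n-u-1}{r-u}$. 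None of these steps appears in your proposal.

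Conversely, the triangle case, where you concentrate your effort, requires no diversity computation at all: if $\mathfrak{B}_2\F$ is a triangle then $\F \subseteq \A_{\mathbb{K}^+}$, so $|\F| \leq 3\binom{n-3}{r-2} + \binom{n-3}{r-3}$, which already contradicts the size hypothesis (the hypothesis at $u=3$ equals exactly this quantity, and it only grows with $u$). Your bookkeeping in that case is also off: with $a,b,c,w$ as you define them, $\deg(x) = b+c+w$, $\deg(y)=a+c+w$, $\deg(z)=a+b+w$, so $\Delta(\F)$ is not $\max\{a,b,c\}+w$ and $d(\F)$ is not $b+c$ (it is at most $\min\{a,b,c\}$); and the inequality you extract from $|\F(x)| \leq \binom{n-1}{r-1}$ gives $a > \binom{n-u-1}{r-u} - \binom{n-u-1}{r-1}$, a negative and useless bound, not the large positive one you state. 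So the proposal both misallocates the effort (triangle case instead of star case) and relies on inequalities that do not hold; the key idea of extracting a large set $Y \in \mathfrak{B}\F$ avoiding the star center and bounding $d(\F)$ by the edges through $Y$ is missing.
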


\begin{proof}
Let $\mathfrak{B}\F$ be the flower base with threshold $r$.
First note that $\tau(\F) \geq 2$ as otherwise $d(\F)=0$. 
Therefore, we may assume $\mathfrak{B}_1\F$ is empty.
First observe that
\begin{align*}
    |\F|  > \binom{n-1}{r-1}  - \binom{n-u-1}{r-1} + \binom{n-u-1}{r-u} \geq u\binom{n-u-1}{r-2}.
\end{align*}
Therefore, by (\ref{combo-bound}), we have
\begin{align*}
 u\binom{n-u-1}{r-2}  < |\F| \leq 
 |\mathfrak{B}_2\F| \binom{n-2}{r-2} + r_0\binom{n-3}{r-3}.
\end{align*}
This implies that $\mathfrak{B}_2\F$ contains at least $u$ members.
Now, as the flower base is intersecting (by Lemma~\ref{inherit}), this means that $\mathfrak{B}_2\F$ must either be a star with center $x$ or a triangle.

Let us first address the case when the members of $\mathfrak{B}_2\F$ form a triangle. 
Then $\F$ is a subfamily the ``two out of three'' family $\A_{\mathbb{K}^+}$. Therefore,
\[
|\F| \leq |\A_{\mathbb{K}^+}| = 3 \binom{n-3}{r-2} +\binom{n-3}{r-3}.
\]
On the other hand, in this case we must have $u=3$, so
\begin{align*}
|\F| & > \binom{n-1}{r-1} - \binom{n-4}{r-1} + \binom{n-4}{r-3} \\
& = 3 \binom{n-4}{r-2} + 3\binom{n-4}{r-3} + \binom{n-4}{r-4} + \binom{n-4}{r-3} \\
&= 3 \binom{n-3}{r-2} + \binom{n-3}{r-3},
\end{align*}
a contradiction.

Now we may suppose that the members of $\mathfrak{B}_2\F$ form a star.
If every member of $\mathfrak{B}\F$ contains $x$, then $d(\F)=0$ and we are done. So suppose that $\mathfrak{B}\F$ contains at least one member $Y$ of size greater than $2$ that does not contain $x$. As the flower base is intersecting, $Y$ must contain the union $\bigcup_{B\in\mathfrak{B}_2\F}B\setminus\{x\}$. Thus, $|Y| \geq u$. 
Each edge of $\F$ must contain $x$ or $Y$. Moreover, as $Y \in \mathfrak{B}\F$, the petals $\F(Y)$ have transversal greater than $r$. Thus, each edge containing $x$ must also intersect $Y$. Therefore, the number of edges of $\F$ satisfies
\[
|\F| \leq \binom{n-1}{r-1} - \binom{n-|Y|-1}{r-1} + \binom{n-|Y|-1}{r-|Y|}.
\]
Given the strict lower bound on $|\F|$, this implies $|Y| > u$.
Now,
\[
d(\F)  \leq |\F| - |\F(x)| =  |\F(Y)| - |\F(Y\cup\{x\})| 
 \leq \binom{n-|Y|-1}{r-|Y|} < \binom{n-u-1}{r-u}. \qedhere
\]
\end{proof}

\section{Concluding remarks}\label{final-section}

In this paper we introduced a generalization of diversity of an intersecting family $\F \subseteq \binom{[n]}{r}$ as
\[
d_C(\F) : = |\F| - C \cdot \Delta (\F).
\]
The original version of diversity is the case when $C=1$. For $0 \leq  C < \frac{7}{3}$ we determined the maximum possible diversity $d_C(\F)$ and characterized the families achieving this maximum. For the range $1 < C < \frac{3}{2}$, these families are ``two out of three'' families and for $\frac{3}{2} \leq C < \frac{7}{3}$ they are the ``Fano-based'' families containing $\mathcal{A}_\mathbb{F}$. We strongly believe that, for the range $\frac{7}{3} \leq C < \frac{13}{4}$, the maximal constructions will be based on the finite projective plane of order $3$ and have diversity $d_C(\F)$ approximately
\[
(13-4C) \binom{n-13}{r-4}.
\]
As $C$ gets larger, constructions become less clear as an appropriate finite projective plane may not exist. However, our understanding of the flower base allows us to observe that $d_C(\F)=O(n^{r-4})$ for $C\geq \frac{7}{3}$ To see this, recall that $d_C(\F)\leq |\F|$. So, if no member of the flower base exists of size at most $3$ for $n$ large, the bound follows. Our approach from Theorem~\ref{main-gen} can be used to show that no transversals of size one or two exist of the flower base of families with positive diversity for $C$ in this range. Should there exist transversals of size $3$ in the flower base, we may have nonempty $\mathfrak{B}_3\F$. However, the transversal condition still holds and shows that members of  $\mathfrak{B}_3\F$ are on a vertex set $V$ of size at most $7$. One can then show that a vertex in $V$ is contained in at least $\frac{3}{7}$ of the members of $\F$, which then gives that the diversity of the family is non-positive. Thus, families with positive $C$-weighted diversity have $O(n^{r-4})$ edges.
We note that this bound is an improvement on that suggested by Theorem~\ref{main-gen-C}, which only gives that $d_C(\F)=O(n^{r-3})$ for $C=\frac{7}{3}$.

There several other generalizations of the diversity question or measure of ``distance from a star.'' Frankl and Wang \cite{frankl2022best} define the {\it $\ell$-diversity} of a family $\F \subseteq \binom{[n]}{r}$ as the maximum size of the unlink $|F(\overline{S})|$ over all $\ell$-sets $S$ of vertices. As the ordinary diversity can be realized as $d(\F) = \max_{x \in [n]} |\F(\overline{x})|$, this is a natural generalization.

Interestingly, the extremal family for the double diversity (i.e.\ the case when $\ell=2$) are very similar to the Fano-based constructions $\A_{\mathbb{F}^+}$ and $\A_{\mathbb{F}}$ for $d_C(\F)$. It's not immediately clear how to connect the double diversity and $d_C(\F)$, however. They also believe that a construction based on the finite projective plane of order $3$ will maximize $3$-diversity.

In \cite{frankl2023improved}, Frankl and Wang discuss the problem to maximize $\frac{\Delta(\F)}{|\F|}$
for an intersecting family $\F \subseteq \binom{[n]}{r}$. They conjecture that for $n > 100r$, if $|\F| > \binom{n-3}{r-3}$, then
\[
\frac{\Delta(\F)}{|\F|} \geq \frac{3}{7}.
\]
We note that Theorem~\ref{main-gen} implies an answer to this conjecture for large $n$. Indeed, for $2 \leq C = \frac{7}{3}-\varepsilon$, we have
\[
|\F| - C \cdot \Delta(\F) \leq (7-3C)\binom{n-3}{r-3}.
\]
Rearranging terms and using $|\F| > \binom{n-3}{r-3}$ and  $C = \frac{7}{3} - \varepsilon$, gives
\[
 \frac{\Delta(\F)}{|\F|} \geq \frac{3C-6}{C} \geq \frac{3-9\varepsilon}{7-3\varepsilon}.
\]
As $\varepsilon$ can be chosen arbitrarily small, we  have $\frac{\Delta(\F)}{|\F|} \geq \frac{3}{7}$.




\bibliography{master}{}
\bibliographystyle{hplain}

\section{Appendix}\label{appendix-section}

We give a proof of the folklore result from Section~\ref{main-section}.

\folkprop*

\begin{proof}
Fix an edge $123 \in \B$ and note that it is a transversal. We shall denote the members of $\B(1)$ containing neither $2$ nor $3$ by $\B_1$. Note that the members of $\B_1$ are of size $2$ and $\B_1$ is non-empty as otherwise $\{2,3\}$ is a transversal of size $2$. We similarly define $\B_2$ and $\B_3$. These three families are pairwise cross-intersecting. Define $\B_{12}$ as the members of $\B(12)$ not containing $3$. Similarly define $\B_{13}$ and $\B_{23}$ and note that the members of these families are all of size $1$. 
We will use that $\B_x$ and $\B_{yz}$ are cross-intersecting and therefore $|\B_{yz}| \leq 2$ when $\B_x$ is a single edge and $|\B_{yz}|\leq 1$ when $\B_x$ contains two intersecting edges and $|\B_{yz}|=0$ when $\B_x$ contains two disjoint edges.

\begin{figure}[H]
\begin{center}
{\includegraphics[scale=1.3]{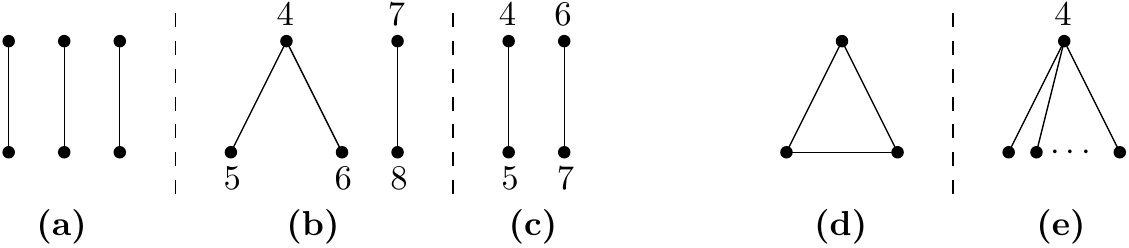}}
\end{center}
\caption{Configurations of $\B_1$ in {\bf Cases (a), (b), (c)} and $\B'$ in {\bf Cases (d), (e)}.}
\end{figure}

We begin by investigating the cases when $\B_1$ is not intersecting.

\medskip
{\bf Case (a):} $\B_1$ contains $3$ disjoint edges.
\medskip

No edge of $\B_2$ can intersect $3$ disjoint edges, so $\B_2$ is empty, a contradiction.

\medskip
{\bf Case (b):} $\B_1$ contains edges $45, 46$ and  $78$.
\medskip

As $\B$ is intersecting, the edges of $\B_2$ and $\B_3$ must all contain vertex $4$ and $\B_{23}$ is empty. Thus, every edge of $\B$ contains $1$ or $4$, i.e., there is a transversal of size $2$, a contradiction.

\medskip
{\bf Case (c):} $\B_1$ consists of $2$ disjoint edges $45$ and $67$.
\medskip

By the previous two cases these two edges span the entire vertex set of $\B_1$. As $\B$ is intersecting, $\B_2$ and $\B_3$ are contained in the vertex set $\{4,5,6,7\}$. The same holds for $\B_{13}$ and $\B_{12}$ and $\B_{23}$ is empty, so $|V(\B)|=7$.

If there is a vertex $a$ in all edges of $\B_2$ and $\B_3$, then $\{1,a\}$ is a transversal of size $2$ of $\B$, a contradiction. 
From here it is easy to see (without loss of generality) that
 $\B_2$ contains disjoint edges $46$ and $57$ and, since $\B_3$ is non-empty, we may assume it contains edge $56$. In order to maintain the intersecting property of $\B$ the only remaining potential edges are $56 \in \B_1$, $56 \in \B_2$, and either $5,6 \in \B_{12}$ or $47 \in \B_3$. Both $\B_{23}$ and $\B_{13}$ are empty. In any case, this gives  $|\B| \leq 10$.

\medskip

The above cases also hold for $\B_2$ and $\B_3$, so
we may now suppose that $\B_1,\B_2,\B_3$ are each intersecting. Let $\B'$ be the multigraph consisting of the edges of $\B_1$, $\B_2$, $\B_3$. As $\B$ is intersecting, $\B_1,\B_2,\B_3$ are pairwise cross-intersecting. Therefore, $\B'$ is intersecting.

\medskip
{\bf Case (d):} $\B'$ is a triangle.
\medskip

We observe $\B_{12}$ cannot be disjoint from $V(\B_3)$. A similar argument for $\B_{13}$ and $\B_{23}$ is immediate. Hence, $V(\B)$ consists of $1,2,3$ and the vertices of the triangle and so $|V(\B)|=6$. As $\B$ is intersecting there is no edge and its complement in $\B$. Therefore, $|\B| \leq \frac{1}{2} \binom{6}{3} = 10$. 

\medskip
{\bf Case (e):} $\B'$ is a star with center vertex $4$.
\medskip

First note that if $\B_1$ contains two edges, then the only possible member of $\B_{23}$ is $4$. As the edges of $\B_2$ and $\B_3$ also contain $4$, we have that $1$ and $4$ is a transversal of size $2$ of $\B$, a contradiction. Therefore, $|V(\B')|\le 4$ and thus $|V(\B)|\leq 7$. Moreover, as each of $\B_1,\B_2,\B_3$ contains a single edge, there are at most $6$ edges among $\B_{23},\B_{13},\B_{12}$ for a total of at most $10$ edges in $\B$.
\end{proof}

\end{document}